\newtheorem{theorem}{Theorem}[section]
\newtheorem{definition}[theorem]{Definition}
\newtheorem{corollary}[theorem]{Corollary}
\newtheorem{proposition}[theorem]{Proposition}
\newtheorem{lemma}[theorem]{Lemma}
\newtheorem{remark}[theorem]{Remark}
\newtheorem{example}[theorem]{Example}
\newcommand{\Aut}{\operatorname{Aut}}
\newcommand{\rank}{\operatorname{rank}}
\title{Generalized Hadamard full propelinear codes
}
\newcommand{\abk}{\allowbreak}
\newcommand{\Sym}{\operatorname{Sym}}
\newcommand{\GH}{\operatorname{GH}}
\newcommand{\GHFP}{\operatorname{GHFP}}
\author{Jos\'e Andr\'es Armario\footnote{\url{E-mail: armario@us.es}}\\
\small Universidad de Sevilla, Sevilla, Spain\\
Ivan Bailera\footnote{\url{E-mail: ivan.bailera@uab.cat}}\\
        \small Universitat Aut\`onoma de Barcelona, Bellaterra, Spain\\
Ronan Egan\footnote{\url{E-mail: ronan.egan@nuigalway.ie}}\\
        \small National University of Ireland Galway, Galway, Ireland
}
\begin{document}

\maketitle

\begin{abstract}
Codes from generalized Hadamard matrices have already been introduced. Here we deal with  these codes when the generalized Hadamard matrices are cocyclic. As a consequence, a new class of codes that we call {\it generalized Hadamard full propelinear codes} turns out. We prove that their existence is equivalent to the existence of central relative $(v,w,v,v/w)$-difference sets. Moreover, some structural properties of these codes are studied and  examples  are provided.\\

\noindent \textbf{Keywords:} Cocycles, generalized Hadamard matrices, difference sets, propelinear codes, rank, kernel.\\

\noindent \textbf{Mathematics Subject Classification (2010):} 05B20, 05E18, 94B60. \end{abstract}

\newpage

\section{Introduction}
Let $G$ and $U$ be finite groups, with $U$ abelian, of orders $v$ and $w$, respectively. A map 
$\psi: G\times G\rightarrow U$ such that
\begin{equation}
\label{CocycleIdentity}
\psi(g,h)\psi(gh,k)=\psi(g,hk)\psi(h,k) \quad\forall \, g,h,k\in G
\end{equation}
is a \emph{cocycle} (\emph{over $G$}, \emph{with coefficients 
in $U$}). 
We may assume that $\psi$ is normalized, i.e., $\psi(g,1) = \psi(1,g)=1$ for all $g \in G$. 
For any (normalized) map $\phi: G\rightarrow U$, the cocycle $\partial\phi$  
defined by 
$\partial\phi(g,h)=\phi(g)^{-1}\phi(h)^{-1}\phi(gh)$ is
a {\em coboundary}. The set of all cocycles 
$\mbox{$\psi:\abk G\times G\rightarrow U$}$ forms an 
abelian group $Z^2(G,U)$ under pointwise multiplication. 
Factoring out the subgroup of coboundaries gives $H^2(G,U)$,
the \emph{second cohomology group of $G$ with coefficients in $U$}.

Given a group $G$ and $\psi\in Z^2(G, U)$, 
denote by $E_\psi$ the canonical 
central extension of $U$ by 
$G$; this has elements $\{(u,g) \mid u\in U,\; g\in G \}$ and 
multiplication $(u,g) \;(v,h)=(uv\hspace{.5pt} \psi(g,h),gh)$.
The image $U\times \{1\}$ of $U$ lies in the centre of $E_\psi$ and the set $T(\psi)=\{(1,g)\,\colon\, g\in G\}$ is a normalized transversal of $U\times \{1\}$ in $E_\psi$.
In the other direction, suppose that $E$ is a finite group with 
normalized transversal $T$ for a central subgroup 
$U$. Put
$G= E/U$ and 
$\sigma (tU) = t$ for $t\in T$. The map
$\psi_T:G\times G\rightarrow U$ defined by
$\psi_T(g,h)=\sigma(g)\sigma(h)\sigma(gh)^{-1}$ is a cocycle; 
furthermore, $E_{\psi_{T}}\cong E$.

Each cocycle $\psi\in Z^2(G,U)$ is displayed as a 
{\em cocyclic matrix} $M_\psi$: under some indexing of 
the rows and 
columns by $G$, $M_\psi$ has entry $\psi(g,h)$ in 
position $(g,h)$.

A cocycle $\psi\in Z^2(G,U)$ is called {\em orthogonal} if, for each $g\neq 1\in G$ and each $u\in U$, $|\{h\in G \;\colon\; \psi(g,h)=u\}|=v/w.$ This definition arose as an equivalent formulation of the condition that the $G$-cocyclic matrix $M_\psi$ be a generalized Hadamard matrix $\GH(w,v/w)$ over $U$. We recall that a $v\times v$ matrix $H$ with entries in $U$, where $w$ divides $v$, is {\em a generalized Hadamard matrix $\GH(w,v/w)$} if, for every $i,j,\;1\leq i<j\leq v$, each of the multisets $\{h_{ik}h_{jk}^{-1} \mid 1\leq k\leq v\}$ contains every element of $U$ exactly $v/w$ times. A $\GH(w,v/w)$ is {\em normalized} if the first row and first column consist entirely of the identity element of $U$. We can always assume that our $\GH$ matrices are normalized.

Let $E$ be a group of order $vw$ with a normal subgroup $Z$ of order $w$. Suppose that $R$ is a $k$-subset of $E$, such that the multiset of quotients $r_1r_2^{-1}, \;r_i\in R,\;r_1 \neq r_2$, contains each element
of $E \setminus Z$ exactly $\lambda$ times, and contains no element of $Z$. Then $R$ is called {\em a $(v, m, k, \lambda)$-relative
difference set} in $E$ with forbidden subgroup $Z$. If $Z$ is a central subgroup of $E$ then we call
$R$ {\em a central relative difference set}.

For certain parameters, the existence of relative difference sets is equivalent to the existence of Hadamard matrices. The following result addresses this situation.

\begin{theorem}\label{ghequiv}\cite[Theorem~4.1]{PH98}
The following statements are equivalent.
\begin{enumerate}
\item $\psi\in Z^2(G,U)$ is orthogonal.
\item $M_\psi$ is a (normalized) $\GH(w,v/w)$.
\item There is a (central) relative $(v,w,v,v/w)$-difference set $T(\psi)=\{(1,g) : g\in G\}$ in the central extension $E_\psi$ of $U$ by $G$, relative to $U\times \{1\}$.
\end{enumerate}
\end{theorem}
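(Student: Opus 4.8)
The plan is to prove the two equivalences (1) $\Leftrightarrow$ (2) and (1) $\Leftrightarrow$ (3) by direct manipulation of the cocycle identity \eqref{CocycleIdentity}; the common theme is that every multiset we need to count turns out to be a translate, inside the abelian group $U$, of the single multiset $\{\psi(g,h) : h\in G\}$ for a fixed $g\neq 1$, so that orthogonality of $\psi$ is exactly what makes all of these multisets uniform.

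For (1) $\Leftrightarrow$ (2) I would index the rows and columns of $M_\psi$ by $G$, so that a pair of distinct rows $g_1\neq g_2$ produces the multiset $\{\psi(g_1,h)\psi(g_2,h)^{-1} : h\in G\}$. Setting $g = g_1g_2^{-1}\neq 1$ and specialising \eqref{CocycleIdentity} at $(g, g_2, h)$ gives $\psi(g,g_2)\psi(g_1,h) = \psi(g, g_2h)\psi(g_2,h)$, hence $\psi(g_1,h)\psi(g_2,h)^{-1} = \psi(g,g_2)^{-1}\psi(g,g_2h)$; since $h\mapsto g_2h$ permutes $G$, this multiset is the fixed translate $\psi(g,g_2)^{-1}\cdot\{\psi(g,h') : h'\in G\}$. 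So it meets every element of $U$ exactly $v/w$ times for all $g_1\neq g_2$ precisely when $\{\psi(g,h') : h'\in G\}$ does for every $g\neq 1$, i.e. precisely when $\psi$ is orthogonal; the reverse implication is the special case $g_2 = 1$ together with $\psi(1,h)=1$, and normalisation of $M_\psi$ is immediate from $\psi(1,h)=\psi(g,1)=1$.

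For (1) $\Leftrightarrow$ (3) I would work inside $E_\psi$, whose identity is $(1,1)$ and in which $(1,g)^{-1} = (\psi(g,g^{-1})^{-1}, g^{-1})$, so that $(1,g_1)(1,g_2)^{-1} = \bigl(\psi(g_2,g_2^{-1})^{-1}\psi(g_1,g_2^{-1}),\, g_1g_2^{-1}\bigr)$. Because $g_1\neq g_2$ forces the second coordinate to be nontrivial, no quotient of distinct members of $T(\psi)$ ever lands in $U\times\{1\}$, which disposes of the forbidden-subgroup requirement; then, fixing $g\neq 1$ and letting $g_1 = gg_2$ with $g_2$ ranging over $G$, I would specialise \eqref{CocycleIdentity} at $(g, g_2, g_2^{-1})$ to get $\psi(g,g_2)\psi(gg_2,g_2^{-1}) = \psi(g_2,g_2^{-1})$, which (using that $U$ is abelian) collapses the first coordinate of the quotient to $\psi(g,g_2)^{-1}$. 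Hence the element $(u,g)$ with $g\neq 1$ occurs among the quotients exactly $|\{g_2\in G : \psi(g,g_2) = u^{-1}\}|$ times, and this equals $v/w$ for all $u$ and all $g\neq 1$ if and only if $\psi$ is orthogonal; centrality of $T(\psi)$ as a relative difference set is automatic because $U\times\{1\}$ is central in $E_\psi$ by construction.

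I do not anticipate a genuine obstacle: the whole argument is careful bookkeeping of cocycle-identity specialisations, of inverses and normalisation in $E_\psi$, and of the fact that left translation in the abelian group $U$ preserves uniform multisets. The one place to be slightly careful is matching conventions so that the parameters line up — $|T(\psi)| = v$, $|E_\psi| = vw$, $|U\times\{1\}| = w$, and $v(v-1) = (v/w)(v-1)\,w$ — confirming that the difference set is of the claimed type $(v,w,v,v/w)$.
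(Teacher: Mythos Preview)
Your argument is correct; each step checks out, including the two specialisations of the cocycle identity and the inverse computation in $E_\psi$. Note, however, that the paper does not give its own proof of this theorem at all: it is quoted verbatim as \cite[Theorem~4.1]{PH98} and used as background, so there is nothing in the paper to compare your write-up against. What you have produced is essentially the standard direct proof one finds in the cited source, and it is complete as written.
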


\begin{example}\label{planar_exa}\cite[Example 9.2.1.4 and Theorem 9.48]{Hor07}
Let $G$ be the additive group of the finite field $\mathbb{F}_{3^a}$ and $\phi_{(a,b)}(g)=g^{(3^b+1)/2},\,\,g\in G$ where $(a,b)=1$, $b$ is odd and $1<b<2a-1$. Then
$$\partial\phi_{(a,b)}(g,h)=\phi_{(a,b)}(g+h)-\phi_{(a,b)}(g)-\phi_{(a,b)}(h)$$
is an orthogonal coboundary. Hence, $M_{\partial\phi_{(a,b)}}$ is a $\GH(3^a,1)$. Later, we will deal with $a=4$ and $b=3$.
\begin{remark}\label{propiedadesplanar} \begin{enumerate}
    \item Coulter and Mathews  found $\phi_{(a,b)}$ as a new class of planar power functions over $\mathbb{F}_{3^a}$ (see \cite{CM97}).
    \item  The symmetric orthogonal coboundaries $\partial\phi_{(a,b)}$ cannot be multiplicative. In particular, the resulting ternary Hadamard codes are not linear $3^a$-ary codes (see \cite[p.227]{Hor07}).
    \item The orthogonal coboundaries $\partial\phi_{(a,b)}$ and $\partial\phi_{(a,2a-b)}$ determine equivalent Hadamard codes (see \cite[Lemma 4.1]{HU03}). Hence we may restrict to the range $3\leq b\leq a-1.$
    \end{enumerate}

\end{remark}

\end{example}

Let $\mathbb{F}_{q}$ denote the finite field of order $q = p^r$, where $p$ is prime.  $\mathbb{F}_{q}$ is an additive elementary abelian group of order $q$.  From $H$ a normalized generalized Hadamard matrix $\GH(q,v/q)$ over $\mathbb{F}_{q}$, we denote by $F_H$ the $q$-ary code consisting of the rows of $H$, and $C_H$ the one defined as $C_H=\cup_{\alpha\in \mathbb{F}_{q}}(F_H+\alpha \bf{1})$ where ${\bf 1}$ denotes  the all-one vector (and $\alpha \bf{1}$ the all-$\alpha$ vector). The code $C_H$ over $\mathbb{F}_q$ is called {\em generalized Hadamard  code} which has $qv$ codewords, 
length $v$ and minimum distance $v-\frac{v}{q}$. Note that $F_H$ and $C_H$ are generally nonlinear codes over $\mathbb{F}_q$.

An ordinary Hadamard matrix of order $v=4t$ corresponds to a $\GH(2,2t$), where $U=\langle -1\rangle$. In this case two further equivalences are known.
\begin{proposition}\label{binaryresultadoprincipal}
When $U=\langle -1 \rangle \cong \mathbb{Z}_2$, the equivalent statements of Theorem \ref{ghequiv} are further equivalent to the following statements.
\begin{itemize}
\item[4.] There is a Hadamard group $E_\psi$ \cite{Fla97}.
\item[5.] $C_H$  is a Hadamard full propelinear code \cite{RS18}.
\end{itemize}
\end{proposition}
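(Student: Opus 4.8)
The plan is to add the two new equivalences one at a time: $(3)\Leftrightarrow(4)$, which is a matter of definitions together with \cite{Fla97}, and $(2)\Leftrightarrow(5)$, which is the developed-code correspondence of \cite{RS18}. By Theorem~\ref{ghequiv} the statements $(1)$, $(2)$, $(3)$ are interchangeable, so it suffices to tie $(3)$ to $(4)$ and $(2)$ to $(5)$. Throughout we are in the case $U=\langle-1\rangle\cong\mathbb{Z}_2$, so $w=2$, $v=4t$, $v/w=2t$, and ``$M_\psi$ is a $\GH(2,2t)$'' means that $M_\psi$ is an ordinary Hadamard matrix of order $4t$.

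For $(3)\Leftrightarrow(4)$ I would unwind the definition of a Hadamard group (see \cite{Fla97}): a group $E$ of order $8t$ is a Hadamard group precisely when it has a central subgroup $\langle z\rangle$ of order $2$ together with a normalized $(4t,2,4t,2t)$-relative difference set with forbidden subgroup $\langle z\rangle$. Specialized to $U=\langle-1\rangle$, statement $(3)$ says exactly that $E_\psi$, which has order $2v=8t$, carries such a configuration, with $\langle z\rangle=U\times\{1\}=\langle(-1,1)\rangle$ and relative difference set $T(\psi)$; hence $(3)\Rightarrow(4)$. For $(4)\Rightarrow(3)$, given a Hadamard group $E$ with relative difference set $D$ and forbidden subgroup $\langle z\rangle$, note that $D$ is a normalized transversal of $\langle z\rangle$ in $E$, so $E\cong E_{\psi_D}$ for the associated cocycle $\psi_D\in Z^2(G,\langle z\rangle)$ with $G=E/\langle z\rangle$, and $D$ corresponds to $T(\psi_D)$; reading Theorem~\ref{ghequiv} from $(3)$ to $(1)$ then shows $\psi_D$ is orthogonal.

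For $(2)\Leftrightarrow(5)$ I would work with the developed code. Assuming $(2)$, after normalizing take $H=M_\psi$ for an orthogonal cocycle $\psi\in Z^2(G,\langle-1\rangle)$ and index the $4t=|G|$ coordinates by $G$. Under the identification $+1\mapsto 0$, $-1\mapsto 1$, the row of $M_\psi$ indexed by $g$ is the word $\mathbf{h}_g=(\psi(g,k))_{k\in G}$ and $C_H=\{\mathbf{h}_g:g\in G\}\cup\{\mathbf{h}_g+\one:g\in G\}$; since the rows of a Hadamard matrix of order $\ge 4$ are pairwise distinct and none is the complement of another, the assignment $(1,g)\mapsto\mathbf{h}_g$, $(-1,g)\mapsto\mathbf{h}_g+\one$ is a bijection $E_\psi\to C_H$, $x\mapsto\mathbf{c}_x$, sending $(1,1)$ to $\zero$ and $(-1,1)$ to $\one$. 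For $x\in E_\psi$ with image $\bar x$ in $G=E_\psi/(U\times\{1\})$, let $\pi_x$ be the permutation of the $4t$ coordinates induced by translating $G$ by $\bar x$. The one nontrivial computation is to check, using the cocycle identity \eqref{CocycleIdentity} with a suitable choice of its three arguments, that this data defines a group operation $\star$ on $C_H$ with $(C_H,\star)\cong E_\psi$ and $\mathbf{c}\star\mathbf{c}'=\mathbf{c}+\pi_\mathbf{c}(\mathbf{c}')$; since $x\mapsto\pi_x$ is a homomorphism one also gets $\pi_{\mathbf{c}\star\mathbf{c}'}=\pi_\mathbf{c}\circ\pi_{\mathbf{c}'}$, so $C_H$ is propelinear. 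One then checks the remaining requirements: $\pi_\zero=\mathrm{id}$; $\one=\mathbf{c}_{(-1,1)}$ is a central element of order $2$ of $(C_H,\star)$ with $\pi_\one=\mathrm{id}$; and for $\mathbf{c}\notin\{\zero,\one\}$ the image $\bar x\ne 1$, so $\pi_\mathbf{c}$ is a fixed-point-free translation. Together with the fact that $C_H$ already has the parameters of a Hadamard code, these give $C_H$ the structure of a Hadamard full propelinear code in the sense of \cite{RS18}, so $(2)\Rightarrow(5)$. For the converse, given a Hadamard full propelinear code $C$ of length $4t$ one argues as in \cite{RS18}: $\one\in C$ generates a central subgroup of order $2$ of $(C,\star)$, the quotient has order $4t$, a transversal $T$ of $\langle\one\rangle$ yields a cocycle $\psi_T$ with $(C,\star)\cong E_{\psi_T}$, and $C$ is identified with the developed code $C_{M_{\psi_T}}$; the full and Hadamard properties of $C$ force $T$ to be a relative $(4t,2,4t,2t)$-difference set, which is $(3)$, and Theorem~\ref{ghequiv} then returns $(2)$.

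Once the indexing is set up, the verification that $\mathbf{c}\star\mathbf{c}'=\mathbf{c}+\pi_\mathbf{c}(\mathbf{c}')$ is a direct application of \eqref{CocycleIdentity}, and $(3)\Leftrightarrow(4)$ is formal. The main obstacle is the implication $(5)\Rightarrow(3)$: recovering from an abstract Hadamard full propelinear code the cocyclic Hadamard matrix whose developed code it is, and in particular showing that the transversal of $\langle\one\rangle$ picked out by the propelinear group structure is a relative difference set. This is precisely the content of \cite{RS18}, which I would invoke rather than redo.
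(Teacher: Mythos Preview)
The paper does not supply its own proof of this proposition: it is stated in the introduction as a known fact, with the equivalence to item~4 attributed to \cite{Fla97} and the equivalence to item~5 attributed to \cite{RS18}. Your proposal is entirely consistent with this---you correctly locate the two equivalences in those references and sketch the arguments they contain.

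It is worth noting that your sketch for $(2)\Rightarrow(5)$ (build the propelinear structure on $C_H$ from the cocycle via coordinate translations in $G$, verify the group law using the cocycle identity) and for $(5)\Rightarrow(3)$ (quotient by $\langle\one\rangle$, extract a cocycle from a transversal, show the transversal is a relative difference set) is precisely the strategy the paper carries out later \emph{in the general $\mathbb{F}_q$ setting}: Proposition~\ref{pfromctocg} does the forward direction and Proposition~\ref{GHFP-SDR} with Corollary~\ref{corolariofundamental} does the converse. So your outline anticipates the paper's own generalization; the only minor difference is bookkeeping in how codewords are indexed by extension-group elements (the paper's map $\Phi$ sends $(k,g)$ to a scalar multiple of the row indexed by $g^{-1}$, whereas you index directly by $g$), but this is an immaterial reparametrization of the same bijection $E_\psi\to C_H$.
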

Binary propelinear codes have been deeply studied in the literature, see \cite{ABBR18,BMRS12,PR02,RBH89} among other references. The Hadamard full propelinear codes are an important family of this type of codes.  However, aside from \cite{BMRS13} not much has been done for $q$-ary propelinear codes, especially for the class of full propelinear codes. 
In this paper, we prove the analog  of Proposition \ref{binaryresultadoprincipal}  when $U$ is 
  the additive group of a finite field (i.e.  additive elementary abelian group). As a consequence, the class of generalized Hadamard full propelinear codes is introduced. Concerning equivalence {\em 4.}, let us mention that the Hadamard group $E_\psi$ in the binary case is effectively what is referred to as the {\em extension group} of a cocyclic Hadamard matrix, which is also defined for generalized Hadamard matrices with entries in $U$. Therefore, if the existence of a generalized Hadamard full propelinear code is equivalent to the existence of an orthogonal cocycle $\psi$, then there is an extension group $E_\psi$. Finally, let us point out that it seems that a generalized Hadamard matrix over any abelian group $U$ (should it exist) would afford the same theory, assuming similar definitions of propelinear codes over groups and so forth.

\section{Propelinear codes} 
Let $\mathbb{F}_q^n$ be the vector space of dimension $n$ over  $\mathbb{F}_q$.
The \emph{Hamming distance} between two vectors $v, w \in \mathbb{F}_q^n$, denoted by $d(v, w)$, is the number of the coordinates in which $v$ and $w$ differ.
A {\em($q$-ary) code} $C$ over $\mathbb{F}_q$ of length $n$ is a nonempty subset of $\mathbb{F}_q^n$. The elements of $C$ are called {\em codewords}. A code $C$ over $\mathbb{F}_q$ is called \emph{linear} if it is a linear space over $\mathbb{F}_q$. The \emph{minimum distance} of a code is the smallest Hamming distance between any pair of distinct codewords. Without loss of generality, we shall assume, unless stated otherwise, that the all-zero vector, denoted by ${\bf 0}$, is in $C$.

Two structural properties of (nonlinear) codes are the rank and dimension of the kernel.
The \emph{rank} of a  code $C$, $r=\rank(C)$, is the dimension of the linear span of $C$.
The \emph{kernel} of a  $q$-ary code, denoted by ${\cal K}(C)$, is defined as ${\cal K}(C) := \{x \in \mathbb{F}_q^n : C + \alpha\; x = C\,\,\mbox{for all}\,\alpha\in\mathbb{F}_q\}$. The $p$-kernel of $C$ is defined as ${\cal K}_p(C)=\{x\in \mathbb{F}_q^n : C+x=C\}$.
Note  that ${\cal K}(C)$ is a linear subspace and ${\cal K}_p(C)$ is $\mathbb{F}_p$-additive.
We will denote the dimension of the kernel of $C$ by $k=\ker(C)$.
These two parameters do not always give a full classification of codes, since two nonisomorphic codes could have the same rank and dimension of the kernel. In spite of that, they can help in classification, since if two codes have different rank or dimension of the kernel, they are nonisomorphic. A code is linear if and only if its rank and the dimension of its kernel are equal to the dimension of the code. In some sense, these two parameters give information about the linearity of a code.

Assuming the Hamming metric, any isometry of $\mathbb{F}_q^n$ is given by a coordinate permutation $\pi$ and $n$ permutations $\sigma_1,\ldots,\sigma_n$ of $\mathbb{F}_q$. We denote by $\Aut(\mathbb{F}_q^n)$ the group of all isometries of $\mathbb{F}_q^n$:
$$\Aut(\mathbb{F}_q^n)=\{(\sigma,\pi)\,\colon \,\sigma=(\sigma_1,\ldots,\sigma_n) \,\mbox{with}\, \sigma_i\in\,  \Sym \mathbb{F}_q,\,\,\,
\pi\in{\cal S}_n\}
$$
where $\Sym \mathbb{F}_q$ and ${\cal S}_n$ denote, respectively, the symmetric group of permutations on  $\mathbb{F}_q$ and on the set   $\{1,\ldots,n\}$. 

For any $\sigma=(\sigma_1,\ldots,\sigma_n)$ where $\sigma_i \in \Sym \mathbb{F}_q $,  $\pi \in{\cal S}_n$ and $v\in \mathbb{F}_q^n$, $v=(v_1,\ldots,v_n)$, we write  $\sigma(v)$ and $\pi(v)$ to denote 
$(\sigma_1(v_1),\ldots,\sigma_n(v_n))$  and
$(v_{\pi^{-1}(1)},\ldots,v_{\pi^{-1}(n)})$, respectively. 

The action of $(\sigma, \pi)$ is defined as
$$
(\sigma,\pi)(v)=\sigma(\pi(v)) \quad\mbox{for any} \,\,v\in \mathbb{F}_q^n,
$$
and the group operation in $\Aut(\mathbb{F}_q^n)$ is the composition
$$ 
(\sigma,\pi)\circ (\sigma',\pi')=((\sigma_1\circ \sigma'_{\pi^{-1}(1)},\ldots,\sigma_n\circ \sigma'_{\pi^{-1}(n)}),\pi\circ\pi') \; \mbox{for all}\; (\sigma,\pi), (\sigma',\pi')\in \Aut(\mathbb{F}_q^n).
$$
Here and throughout the entire paper, we use the convention 
$f\circ g (v)=f(g(v)),$ for $v\in\mathbb{F}_q^n$.
We denote by $\Aut(C)$ the group of all isometries of $\mathbb{F}_q^n$ fixing the code $C$ and we call it the {\em automorphism group} of the code $C$.

At this point, we introduce some basic background on automophism group of a matrix. 
Let $K$ be a multiplicative group isomorphic to the additive elementary abelian group $\mathbb{F}_{q}$, and let $\phi : \mathbb{F}_{q} \rightarrow K$ be an isomorphism. \emph{An automorphism of a matrix} $M$ with entries in a group $K$ is a pair of monomial matrices $(P,Q)$ with non-zero entries in $K$ such that $PMQ^{*} = M$, where $Q^{*}$ denotes the matrix obtained from the transpose of $Q$ by replacing each non-zero entry with it's inverse in $K$, and matrix multiplication is carried out over the group ring $\mathbb{Z} [K]$.  \emph{The automorphism group} $\Aut(M)$ of $M$ is the set of all such pairs of matrices, closed under the multiplication $(P,Q)(R,S) = (PR,QS)$.  \emph{The permutation automorphism group of} $M$ is the subgroup $\mathrm{PAut}(M) \subset \mathrm{Aut}(M)$ comprised of all pairs of permutation matrices in $\mathrm{Aut}(M)$.

\begin{lemma}\cite{DF11}
Let $M$ be a {$K$}-monomial matrix of order $n$. Then $M$ has
a unique factorization $D_MP_M$ where $D_M$ is a diagonal matrix and $P_M$ is a permutation matrix.
\end{lemma}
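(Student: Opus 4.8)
The plan is to construct the factorization explicitly from the combinatorial data carried by $M$, and then to show that any factorization of the prescribed shape must coincide with it. Since $M$ is $K$-monomial, each row $i$ of $M$ contains exactly one non-zero entry; write $\pi(i)$ for the column index of that entry and $d_i\in K$ for its value. Because $M$ also has exactly one non-zero entry in each column, the map $i\mapsto\pi(i)$ is injective, hence a permutation of $\{1,\dots,n\}$. Define $P_M$ to be the permutation matrix of $\pi$, that is $(P_M)_{i,\pi(i)}=1$ and all other entries $0$, and let $D_M$ be the diagonal matrix with $(D_M)_{ii}=d_i$.

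First I would verify that $D_MP_M=M$. Carrying the multiplication out over $\mathbb{Z}[K]$, the $(i,j)$ entry of $D_MP_M$ reduces to the single term $d_i\,(P_M)_{ij}$, which equals $d_i$ when $j=\pi(i)$ and $0$ otherwise; this is precisely $M_{ij}$. (In particular the product is again monomial, so no genuine $\mathbb{Z}$-linear combinations arise.) This establishes existence.

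For uniqueness, suppose $M=DP$ with $D$ diagonal and $P$ a permutation matrix. The key point is that each diagonal entry of $D$ is a non-zero element of $K$, hence a unit in $\mathbb{Z}[K]$, so left multiplication by $D$ does not change the zero/non-zero pattern of $P$. Therefore $M$ and $P$ have the same support; since $P$ is a permutation matrix, its unique non-zero entry in row $i$ must lie in column $\pi(i)$, so $P=P_M$. Multiplying on the right by $P_M^{*}$, which is the inverse of $P_M$ in the matrix ring over $\mathbb{Z}[K]$, yields $D=D_M$, and comparing diagonal entries gives $D_{ii}=M_{i,\pi(i)}=d_i$, as required.

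There is no serious obstacle here; the only steps requiring attention are bookkeeping ones, namely that monomiality in \emph{both} rows and columns is what makes $\pi$ a bijection (and hence $P_M$ a genuine permutation matrix), and that the entries of $K$, viewed inside $\mathbb{Z}[K]$, are units, so that passing to supports is legitimate and $P_M$ is invertible.
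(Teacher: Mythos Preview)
Your argument is correct and is the standard way to prove this elementary fact. Note, however, that the paper does not supply its own proof of this lemma at all: it is simply quoted from \cite{DF11} and used as background. So there is nothing in the paper to compare against, and your write-up stands on its own as a complete justification.
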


Here we will focus on GH-codes, $C_H$, where $H$ denotes a generalized Hadamard matrix of order $v$ with entries in the additive elementary abelian group $\mathbb{F}_{q}$ and write $\phi(H) = [\phi(h_{ij})]_{1 \leq i,j \leq v}$.

In what follows, we will make explicit the correspondence between the  elements of the automorphism group $\mathrm{Aut}(\phi(H))$ and  certain isometries of $C_H$ (elements of $\mathrm{Aut}(C_H)$).
Let $(M,N)\in \mathrm{Aut}(\phi(H))$, $x=[[D_M]_{1,1},\ldots,[D_M]_{v,v}]$ and $X$ be the $v\times v$ matrix such that each column is equal to $x^T$. It follows that 
$\phi(X+H)=D_M\phi(H)$. Likewise, if $Y$ is a $v \times v$ matrix over $\mathbb{F}_{q}$ such that each row is equal to $y = [[D_N]_{1,1},\ldots,[D_N]_{v,v}]$, then $\phi(H-Y) = \phi(H)D_{N}^{*}$.  So, 
$\phi(X + P_MHP_N^{T} - Y)=M\phi(H)N^{*}= \phi(H)$. Thus $X + P_MHP_N^{T} - Y=H$ and $(\sigma, \pi)\in\mathrm{Aut}(C_H)$ where
$\sigma_i(u)=u+[D_N]_{ii}$ and $\pi(1,\ldots,v)=(1,\ldots,v)P_N$. We will say that $(\sigma, \pi)$ is the isometry of $C_H$ associated to the automorphism $(M,N)$ of $\phi(H)$. Now, the following question arises naturally:  Given an isometry $(\sigma, \pi)$ of $C_H$, is it possible to define an automorphism $(M,N)$ of $\phi(H)$ associated to $(\sigma, \pi)$? We will answer this question affirmatively in a particular case in the next section.


\begin{definition}\cite{BMRS13}
A $q$-ary code $C$ of length $n$, ${\bf 0}\in C,$ is called {\em properlinear} if for any codeword $x\in C$ there exist 
$\pi_x\in {\cal S}_n$ and $\sigma_x=(\sigma_{x,1},\ldots,\sigma_{x,n})$ with $\sigma_{x,i}\in \Sym \mathbb{F}_q$ satisfying:
\begin{itemize}
    \item[(i)] for any $x\in C$ it holds $(\sigma_x,\pi_x)(C)=C$ and 
    $(\sigma_x,\pi_x)({\bf 0})=x$,
    \item[(ii)] if $y\in C$ and $z=(\sigma_x,\pi_x)(y)$, then 
    $(\sigma_z,\pi_z)=(\sigma_x,\pi_x)\circ(\sigma_y,\pi_y)$.
\end{itemize}
\end{definition}
A $q$-ary code is called {\em transitive} if the $\Aut(C)$ acts transitively on its codewords, i.e., the code satisfies the property (i) of the above definition. 

For all $x \in C$ and for all $y\in \mathbb{F}_q^n$, denote by $\star$ the binary operation such that $x\star y=(\sigma_x, \pi_x)(y)$. Then, $(C,\star)$ is a group, which is not abelian in general. This group structure is compatible with the Hamming distance, that is, such that $d(x\star u, x\star v)=d(u,v)$ where $u,v\in\mathbb{F}_q^n$.

The vector ${\bf 0}$ is always a codeword where $\pi_{\bf 0}=Id_n$ is the identity coordinate permutation and $\sigma_{{\bf 0},i }=Id_q$ is the identity permutation on $\mathbb{F}_q \,\,\forall i$.
Hence, ${\bf 0}$ is the identity element in $C$ and 
$\pi_{x^{-1}}=\pi_x^{-1}$ and $\sigma_{x^{-1},i}=\sigma^{-1}_{x,\pi_x(i)}$
for all $x\in C$ and for all $i\in\{1,\ldots,n\}$. We also say that $(C,\star)$ is \emph{a propelinear code}.

Clearly, the propelinear class is more general than the linear code class.

\begin{proposition}
Let $(C,\star)\subset \mathbb{F}_q^n$ be a group. $C$ is propelinear code if and only if the group $\Aut(C)$ (the isometries) contains a regular subgroup acting  transitively on $C$.
\end{proposition}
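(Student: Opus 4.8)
The plan is to prove both implications directly from the definitions, using the binary operation $\star$ to pass between codewords and isometries. First I would handle the direction ``propelinear $\Rightarrow$ regular subgroup''. Suppose $(C,\star)$ is a propelinear code. For each $x\in C$ set $g_x=(\sigma_x,\pi_x)\in\Aut(\mathbb{F}_q^n)$; by property (i) of the definition, $g_x\in\Aut(C)$ and $g_x(\mathbf{0})=x$. I claim the set $S=\{g_x : x\in C\}$ is a subgroup of $\Aut(C)$. Closure follows from property (ii): if $z=g_x(y)$ then $z\in C$ (as $g_x(C)=C$) and $g_z=g_x\circ g_y$, so $g_x\circ g_y=g_z\in S$. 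The identity $g_{\mathbf 0}=\mathrm{Id}$ lies in $S$, and inverses lie in $S$ because $g_x^{-1}=g_{x^{-1}}$ (this is exactly the relation $\pi_{x^{-1}}=\pi_x^{-1}$, $\sigma_{x^{-1},i}=\sigma_{x,\pi_x(i)}^{-1}$ noted in the text, which itself follows from (ii) applied to $y=x^{-1}$). So $S$ is a subgroup. It acts transitively on $C$ since $g_x(\mathbf 0)=x$ ranges over all of $C$; and it acts \emph{regularly} because the stabilizer of $\mathbf 0$ is trivial: if $g_x(\mathbf 0)=\mathbf 0$ then $x=\mathbf 0$ and $g_x=g_{\mathbf 0}=\mathrm{Id}$. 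Thus $S$ is the required regular subgroup.

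Conversely, suppose $\Aut(C)$ contains a subgroup $S$ acting regularly on $C$. Since the action is regular, for each $x\in C$ there is a unique $g_x\in S$ with $g_x(\mathbf 0)=x$, and $x\mapsto g_x$ is a bijection $C\to S$. Write $g_x=(\sigma_x,\pi_x)$ and define $\pi_x,\sigma_x$ accordingly; I must verify (i) and (ii). Property (i) is immediate: $g_x\in\Aut(C)$ gives $(\sigma_x,\pi_x)(C)=C$, and $g_x(\mathbf 0)=x$ by construction. For (ii), suppose $y\in C$ and $z=g_x(y)$; then $z\in C$ (as $g_x$ fixes $C$ setwise), and I must show $g_z=g_x\circ g_y$. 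Now $g_x\circ g_y\in S$ (as $S$ is a subgroup) and $(g_x\circ g_y)(\mathbf 0)=g_x(g_y(\mathbf 0))=g_x(y)=z$; by \emph{uniqueness} of the element of $S$ sending $\mathbf 0$ to $z$, we conclude $g_x\circ g_y=g_z$, which is exactly (ii). Hence $C$ is propelinear.

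The argument is essentially a dictionary translation, and the only point that requires care — the ``main obstacle'' such as it is — is checking that the map $x\mapsto g_x$ is well-defined and bijective: in the forward direction one must make sure the $g_x$ furnished by the propelinear structure is uniquely determined so that $S$ is genuinely a set indexed by $C$ (here regularity of the resulting action does the job), and in the backward direction one must invoke regularity to get both existence and uniqueness of $g_x$, which is precisely what makes property (ii) fall out of the subgroup axioms. I would also remark that the convention $f\circ g(v)=f(g(v))$ fixed in the preliminaries is what makes the composition in (ii) match the group operation in $\Aut(\mathbb{F}_q^n)$, so no reversal of order is needed.
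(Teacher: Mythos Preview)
Your proposal is correct and follows essentially the same approach as the paper's proof: in the forward direction you take the left-translation maps $g_x=(\sigma_x,\pi_x)$ (the paper writes $\rho_x(v)=x\star v$, which is the same thing) and verify they form a regular subgroup of $\Aut(C)$; in the backward direction you use regularity to define $g_x$ uniquely by $g_x(\mathbf{0})=x$ and then check (i) and (ii) directly, exactly as the paper does. Your write-up is in fact a bit more explicit than the paper's about why the stabilizer of $\mathbf{0}$ is trivial and why uniqueness in the regular action yields (ii).
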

\begin{proof}
Firstly, we assume $C$ is propelinear. Let $\rho_x:C\rightarrow C$ given by $\rho_x(v)=x \star v$. Let $x,y,z$ be any codewords in $C$, we have $\rho_x \rho_y(z)= \rho_x (y \star z)=x \star (y \star z) = (x \star y) \star z= \rho_{x\star y} (z)$. From \cite[Lemma 5]{BMRS13}, we have $d(x \star y, x \star z) = d(y,z)$, and so $d(\rho_x(y),\rho_x(z))=d(x\star y, x\star z)=d(y,z)$. Therefore, $G=\{\rho_x \mid x\in G\}$ is a subgroup of $\Aut(C)$, and $|G|=|C|$. Given $x,y \in C$, we take $z=y\star x^{-1}$, and so we have $\rho_z(x)=z \star x= y \star x^{-1} \star x=y$. Hence, $G$ acts transitively on $C$.

Conversely, we assume $\Aut(C)$ contains a regular subgroup $G$ acting transitively on $C$, so $|G|=|C|$. We call $\rho_x$ the element of $G$ such that $\rho_x({\bf 0})=x$. Note that $G \rightarrow C$ given by $\rho_x \rightarrow x$ is a bijection since $G$ is regular and acts transitively on $C$.
For $x \in C$, we define $(\sigma_x,\pi_x)(y)=\rho_x(y)$. Note that $(\sigma_x,\pi_x) \in \mathrm{Aut}(C)$ because $\rho_x$ is an isometry on $C$. We define $x \star y = (\sigma_x,\pi_x)(y)=\rho_x(y)$, where $x \in C$. Let us see that the operation $\star$ is propelinear, and so $C$ has a propelinear structure.
It is clear that $(\sigma_x,\pi_x)(C)=\rho_x(C)=C$, and $x\star {\bf 0}=\rho_x({\bf 0})=x$ for any $x\in C$.
As $G$ acts transitively on $C$, we have $\rho_x\rho_y=\rho_{x\star y}$ if and only if $\rho_x\rho_y({\bf 0})=\rho_{x\star y}({\bf 0})$.
Let $x,y \in C$, then $\rho_{x\star y}({\bf 0})=x \star y = \rho_x(y)=\rho_x(\rho_y({\bf 0}))=\rho_x \rho_y ({\bf 0})$. Thus, $(\sigma_{x\star y},\pi_{x \star y})(z)=\rho_{x\star y}(z)=\rho_x \rho_y(z)=\rho_x((\sigma_y,\pi_y)(z))=(\sigma_x,\pi_x)\circ(\sigma_y,\pi_y)(z)$.
\end{proof}

In the binary case, when $q=2$, taking the usual addition on $\mathbb{F}_2$, the above definition is reduced to the following:

 A binary code $C$ of length $n$ is  \emph{propelinear} \cite{RBH89} if for each codeword $x \in C$ there exists $\pi_x\in {\cal S}_n$ satisfying the following conditions for all $y \in C$:
\begin{itemize}
\item[(i)] $x+\pi_x(y) \in C$.
\item[(ii)] $\pi_x\pi_y=\pi_{x+\pi_x(y)}$.
\end{itemize}
Furthermore, $C$ is called {\em full propelinear} \cite{RS18} if the permutation $\pi_x$ has not any fixed coordinate when $x\neq {\bf 0}$, $x\neq {\bf 1}$; and if ${\bf 1}\in C$ then $\pi_{\bf 1}=Id_n $.

\begin{definition}A \emph{full propelinear code} is a propelinear code $C$ such that for every $a \in C$, $\sigma_a(x)=a+x$ and $\pi_a$ has not any fixed coordinate when   $a\neq \alpha {\bf 1}$ for $\alpha\in \mathbb{F}_q$. Otherwise, $\pi_a=Id_n$.
\end{definition}

 A generalized Hadamard code, which is also full propelinear, is called \emph{generalized Hadamard full propelinear code} (briefly, $\GHFP$-code). In the binary case, we have the Hadamard full propelinear codes, they were introduced in \cite{RS18} and their equivalence with Hadamard groups was proven.
 
 \begin{lemma} Let $(C,\star)$ be a $\GHFP$-code and $a, b\in C$. If $a-b=\lambda{\bf 1}$ where $\lambda\in \mathbb{F}_q$ then $\pi_a=\pi_b$.
  \end{lemma}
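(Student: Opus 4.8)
The plan is to realise $a$ as $b\star(\lambda\mathbf{1})$ inside the group $(C,\star)$ and then read off the coordinate‑permutation parts via the propelinear composition rule. So the whole argument hinges on two facts: that $\lambda\mathbf{1}$ is itself a codeword, and that $\pi_{\lambda\mathbf{1}}=Id_n$.

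First I would check that $\lambda\mathbf{1}\in C$. Since $C$ is a generalized Hadamard code, $C=C_H=\bigcup_{\alpha\in\mathbb{F}_q}(F_H+\alpha\mathbf{1})$ for a normalized $\GH$ matrix $H$; as the first row of $H$ consists of the identity of $\mathbb{F}_q$, we have $\mathbf{0}\in F_H$, hence $\lambda\mathbf{1}=\mathbf{0}+\lambda\mathbf{1}\in C$. Write $c=\lambda\mathbf{1}$. Because $c$ is an all‑$\lambda$ vector, i.e.\ $c=\lambda\mathbf{1}$, the definition of a full propelinear code forces $\pi_c=Id_n$ (and $\sigma_c(x)=c+x$).

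Next I would compute $b\star c$. By the definition of $\star$ and of a full propelinear code, $b\star c=(\sigma_b,\pi_b)(c)=\sigma_b(\pi_b(c))=b+\pi_b(\lambda\mathbf{1})$. A coordinate permutation fixes every all‑$\lambda$ vector, so $\pi_b(\lambda\mathbf{1})=\lambda\mathbf{1}$ and therefore $b\star c=b+\lambda\mathbf{1}=a$. Now apply property (ii) of the propelinear structure with $x=b$ and $y=c$: since $z:=(\sigma_b,\pi_b)(c)=b\star c=a$, we obtain $(\sigma_a,\pi_a)=(\sigma_b,\pi_b)\circ(\sigma_c,\pi_c)$. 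Taking the coordinate‑permutation component of this identity and using the composition law in $\Aut(\mathbb{F}_q^n)$ gives $\pi_a=\pi_b\circ\pi_c=\pi_b\circ Id_n=\pi_b$, as claimed. (The case $\lambda=0$ is included automatically, since then $c=\mathbf{0}$ and $b\star c=b=a$.)

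As for difficulties, there is essentially no hard computation here; the only point requiring care is the first one, namely recognising that $\lambda\mathbf{1}$ genuinely lies in $C$ (which relies on $C$ being an honest $\GH$‑code, not merely an abstract propelinear group) so that the propelinear axioms may be invoked for it, together with keeping track of the convention $f\circ g(v)=f(g(v))$ when extracting the permutation part from the composition in $\Aut(\mathbb{F}_q^n)$.
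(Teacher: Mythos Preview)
Your proof is correct and follows essentially the same approach as the paper: both establish $b\star\lambda\mathbf{1}=b+\pi_b(\lambda\mathbf{1})=b+\lambda\mathbf{1}=a$ and then deduce $\pi_a=\pi_b$ from the propelinear structure. The only minor difference is in the final step: you invoke axiom~(ii) directly to get $\pi_a=\pi_b\circ\pi_{\lambda\mathbf{1}}=\pi_b$, whereas the paper computes $\pi_a(x)=a\star x-a$ explicitly (using that $\lambda\mathbf{1}$ is central in $(C,\star)$) and simplifies to $\pi_b(x)$; these are equivalent routes to the same conclusion.
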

\begin{proof}
We have 
$b\star\lambda{\bf 1}=b+\pi_b(\lambda{\bf 1})=b+\lambda{\bf 1}=a$ and $a\star\lambda{\bf 1}=\lambda{\bf 1}\star a$.
On the other hand,
$\pi_a(x)=a\star x - a=(b\star\lambda{\bf 1})\star x-(b+\lambda{\bf 1})=
(b\star x)\star\lambda {\bf 1}-(b+\lambda{\bf 1})=(b\star x)+\lambda {\bf 1}-(b+\lambda{\bf 1})=b\star x-b=\pi_b(x),$ for all $x\in C.$
\end{proof}
\begin{lemma}
Let $C$ be a $\GHFP$-code  and $e_i$ be the unitary vector with only nonzero coordinate at the $i$-th position. If $x,y\in C$ then $\pi_x^{-1}(e_i)=\pi_y^{-1}(e_i)$ if and only if $x= y+\lambda{\bf 1}, \,\,\lambda\in \mathbb{F}_q$. Furthermore, if $x, y\in F_H$ then $x=y.$
\end{lemma}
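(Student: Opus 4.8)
The plan is to reduce the statement to the two preceding lemmas together with the defining fixed‑point‑free property of a $\GHFP$‑code. First I would translate the hypothesis into a condition on the permutations alone: since $\pi_x$ acts only on coordinates and $e_i$ is supported at position $i$, one has $\pi_x^{-1}(e_i)=e_{\pi_x^{-1}(i)}$, so the hypothesis $\pi_x^{-1}(e_i)=\pi_y^{-1}(e_i)$ says precisely that $\pi_x^{-1}(i)=\pi_y^{-1}(i)$.

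For the forward implication, put $w:=x\star y^{-1}\in C$ (closure of the group $(C,\star)$). Using $\pi_{y^{-1}}=\pi_y^{-1}$ and $\pi_{x\star y^{-1}}=\pi_x\circ\pi_y^{-1}$, and then the hypothesis in the form $\pi_y^{-1}(i)=\pi_x^{-1}(i)$, we get $\pi_w(i)=\pi_x(\pi_y^{-1}(i))=\pi_x(\pi_x^{-1}(i))=i$, so $\pi_w$ has the fixed coordinate $i$. By the defining property of a $\GHFP$‑code, read contrapositively, a codeword whose permutation has a fixed coordinate must equal $\alpha\mathbf{1}$ for some $\alpha\in\mathbb{F}_q$; hence $w=\alpha\mathbf{1}$ and $\pi_w=\mathrm{Id}$. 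Then $x=w\star y=\sigma_w(\pi_w(y))=\sigma_w(y)=w+y=y+\alpha\mathbf{1}$, which is the claim with $\lambda=\alpha$. Conversely, if $x=y+\lambda\mathbf{1}$ then $\pi_x=\pi_y$ by the previous lemma, hence $\pi_x^{-1}(e_i)=\pi_y^{-1}(e_i)$.

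For the ``furthermore'' clause, suppose $x,y\in F_H$ are rows of the normalized $\GH(q,v/q)$ matrix $H$. By the equivalence just proved, $x=y+\lambda\mathbf{1}$; since the first column of $H$ is the all‑zero column, reading off the first coordinate gives $0=y_1+\lambda=\lambda$, whence $\lambda=0$ and $x=y$. (Alternatively, $x-y=\lambda\mathbf{1}$ is constant, whereas distinct rows of a $\GH(q,v/q)$ realize each element of $\mathbb{F}_q$ exactly $v/q<v$ times in their difference.) I do not anticipate a genuine obstacle here; the only care needed is bookkeeping with the paper's conventions ($\pi(v)_k=v_{\pi^{-1}(k)}$ and $f\circ g(v)=f(g(v))$), making sure the fixed‑coordinate dichotomy is applied to an honest codeword — which is exactly why one passes to $w=x\star y^{-1}$ rather than arguing directly on $x,y$ — and recalling that $\pi_{\alpha\mathbf{1}}=\mathrm{Id}$ and $\sigma_{\alpha\mathbf{1}}(z)=\alpha\mathbf{1}+z$ by definition of a $\GHFP$‑code.
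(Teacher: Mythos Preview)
Your proposal is correct and follows essentially the same route as the paper: both arguments pass to $x\star y^{-1}$, use $\pi_{x\star y^{-1}}=\pi_x\pi_y^{-1}$ to produce a fixed coordinate, and invoke the full propelinear condition to force $x\star y^{-1}=\lambda\mathbf{1}$. The paper's proof is terser (it leaves the converse and the ``furthermore'' clause implicit), whereas you spell out both, but there is no substantive difference in method.
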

\begin{proof}
We have $\pi_x^{-1}(e_i)=\pi_y^{-1}(e_i)\Leftrightarrow e_i=\pi_x\pi_y^{-1}(e_i)=\pi_x\pi_{y^{-1}}(e_i)=\pi_{x\star y^{-1}}(e_i)$. Since $C$ is full then $x\star y^{-1}=\lambda{\bf 1}$, $\,\lambda\in \mathbb{F}_q$.
\end{proof}
\begin{lemma}
Let $C$ be a $\GHFP$-code, $\Pi=\{\pi_x\,\colon\, x\in C\}$ and $C_1=\{\lambda{\bf1}\colon \lambda\in \mathbb{F}_q\}$. Then $C_1\subset K(C)$ and 
$\Pi$ is isomorphic to $C/C_1$.
\end{lemma}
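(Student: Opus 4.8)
The plan is to realise $\Pi$ as the image of the group homomorphism $\Phi\colon (C,\star)\to {\cal S}_n$, $x\mapsto\pi_x$, to show that its kernel is exactly $C_1$, and then to quote the first isomorphism theorem; the inclusion $C_1\subseteq K(C)$ is treated separately and follows directly from the definition of a full propelinear code.

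First I would establish the basic facts about $C_1$. Since a $\GHFP$-code equals $C_H$ for some normalized generalized Hadamard matrix $H$ and $\mathbf 0\in F_H$, each $\lambda{\bf 1}=\mathbf 0+\lambda{\bf 1}$ lies in $C$, so $C_1\subseteq C$. For $a=\lambda{\bf 1}$ the definition of a full propelinear code gives $\pi_a=Id_n$ and $\sigma_a(y)=a+y$, hence $a\star y=\sigma_a(\pi_a(y))=\lambda{\bf 1}+y$ for every $y\in\mathbb{F}_q^n$. Feeding $y=\mu{\bf 1}$ into this shows $C_1$ is a subgroup of $(C,\star)$ isomorphic to $(\mathbb{F}_q,+)$, while feeding $y\in C$ and using property (i) of the propelinear structure ($(\sigma_a,\pi_a)(C)=C$) gives $\lambda{\bf 1}+C=C$. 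Because $\alpha(\lambda{\bf 1})=(\alpha\lambda){\bf 1}\in C_1$ for all $\alpha\in\mathbb{F}_q$, this last identity yields $C+\alpha(\lambda{\bf 1})=C$ for every $\alpha$, i.e. $\lambda{\bf 1}\in K(C)$; thus $C_1\subseteq K(C)$.

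Next I would check that $\Phi$ is a homomorphism onto $\Pi$. Unwinding the definitions, $x\star y=(\sigma_x,\pi_x)(y)$, so condition (ii) in the definition of a propelinear code, read off in the ${\cal S}_n$-component of the composition law of $\Aut(\mathbb{F}_q^n)$, says exactly that $\pi_{x\star y}=\pi_x\circ\pi_y$. Hence $\Pi=\Phi(C)$ is a subgroup of ${\cal S}_n$ (in particular a group) and $\Phi$ is a surjective homomorphism onto it.

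Finally I would identify $\ker\Phi$. One inclusion, $C_1\subseteq\ker\Phi$, is just $\pi_{\lambda{\bf 1}}=Id_n$, already noted. For the converse, if $\pi_x=Id_n$ then $\pi_x$ fixes every coordinate, so by the full propelinear hypothesis $x$ cannot fail to be of the form $\alpha{\bf 1}$; thus $x\in C_1$ and $\ker\Phi=C_1$. Since a kernel is automatically normal, $C/C_1$ is a group and the first isomorphism theorem gives $\Pi=\Phi(C)\cong C/\ker\Phi=C/C_1$. There is no serious obstacle here: the only point requiring care is the convention-tracking needed to extract $\pi_{x\star y}=\pi_x\circ\pi_y$ from axiom (ii), since $\star$ and composition in $\Aut(\mathbb{F}_q^n)$ are different operations; everything else is a direct unwinding of definitions.
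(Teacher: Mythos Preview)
Your proposal is correct and follows essentially the same route as the paper: define the homomorphism $x\mapsto\pi_x$, identify its kernel as $C_1$ via the full-propelinear condition, and invoke the first isomorphism theorem. You supply considerably more detail than the paper (in particular a genuine argument for $C_1\subseteq K(C)$, which the paper's proof treats as immediate), but the underlying strategy is identical.
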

\begin{proof}
It is immediate that $C_1=\{\lambda{\bf1}\colon u\in \mathbb{F}_q\}$. 
The map $x\rightarrow \pi_x$ is a group homomorphism from $C$ to $\Pi$.  Since $C$ is full propelinear, the kernel of this homomorphism is $C_1$. Hence, we conclude with the desired result.
\end{proof}
\section{GHFP-codes and cocyclic generalized Hadamard matrices }
From now on, $H$ denotes a generalized Hadamard matrix of order $v$ with entries in the additive elementary abelian group $\mathbb{F}_q$. $K$ denotes a multiplicative group isomporphic to the additive elementary abelian group $\mathbb{F}_q$, and let $\phi : \mathbb{F}_q\rightarrow K$ be an isomorphism.  Write $\phi(H) = [\phi(h_{ij})]_{1 \leq i,j \leq n}$. 

Consider the $qv \times v$ matrix $E_{\phi(H)}$ comprised of the $q$ blocks $k_{0}\phi(H),\ldots,$ $k_{q-1}\phi(H)$ where $K = \{1=k_{0},\ldots,k_{q-1}\}$. Assuming that $C_H$ is a $\GHFP$-code and $a,x\in C_H$, then the action of $a$ on $C_H$ defined by 
$$\rho_a(x)=a\star x=a+\pi_a(x)\in C_H,$$
 ($\rho_a \in \mathrm{Aut}(C_H)$) is equivalent to the action of $N^*$ on $E_{\phi(H)}$  by right matrix multiplication where $N^{*} = Q^{*}D_{-a}^{*}$, with $Q$ being the permutation matrix according to $\pi_{a}$, and $D_{a}$ the diagonal matrix with diagonal $ \phi(a)$. Since the action of $a$ on $C$ preserves $C$, there is a $qv \times qv$ permutation matrix $P'$ such that $P'E_{\phi(H)}N^{*} = E_{\phi(H)}$. Moreover, the rows of $E_{\phi(H)}$ are the rows of $\phi(H), k_{1}\phi(H),\ldots,k_{q-1}\phi(H)$.  Thus there is a $v \times v$ monomial matrix $M = D_{k}P$ with $k$ a vector of length $v$ over $K$ such that $M\phi(H)N^{*} = \phi(H)$, where for all $1 \leq i,j \leq v$ and $0 \leq d \leq q-1$, if $P'$ permutes row $j + dv$ to row $i$ then
\begin{itemize}
\item $P$ permutes row $j$ to row $i$, and
\item the $i$-th entry of $k$ is $k_{d}$.
\end{itemize}
Thus $(M,N)$ is an automorphism of $\phi(H)$, and if $a =\lambda {\bf 1}$ for some $\lambda \in \mathbb{F}_{q}$, then the corresponding automorphism is of the form $(\phi(-\lambda)I,\phi(-\lambda)I)$.  This proves the following.
\begin{theorem}\label{thm1}
If $H$ is a generalized Hadamard matrix over the additive abelian group of $\mathbb{F}_{q}$ such that the rows of $H$ comprise a $\GHFP$-code $C$, then the group $(C,\star) \cong R \subseteq \mathrm{Aut}(\phi(H))$.  Moreover, $(kI,kI) \in R$ for all $k \in K$, and $R$ acts transitively on rows of $\phi(H)$.
\end{theorem}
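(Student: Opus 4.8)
The plan is to make the construction in the paragraph preceding the statement rigorous: produce an injective group homomorphism $a\mapsto(M_a,N_a)$ from $(C,\star)$ into $\Aut(\phi(H))$ (with $C=C_H$ here), take $R$ to be its image, and then read off the two supplementary claims. Fix once and for all the full propelinear structure on $C$, so that for each codeword $a$ one has $\sigma_a(x)=a+x$ and a permutation $\pi_a\in\mathcal{S}_v$ with $\pi_{\lambda\mathbf{1}}=\mathrm{Id}$; write $Q_a$ for the permutation matrix of $\pi_a$, $D_a$ for the diagonal matrix with diagonal $\phi(a)$, and $N_a=D_aQ_a$, so $N_a^*=Q_a^*D_{-a}^*$. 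The first (routine) step is to record that the rows of $E_{\phi(H)}$ are precisely $\phi(C_H)$, since $\phi(F_H+\alpha\mathbf{1})$ equals $\phi(\alpha)\phi(H)$ entrywise, and that on this set of rows the isometry $\rho_a(x)=a\star x=a+\pi_a(x)$ of $C_H$ is realized by right multiplication by $N_a^*$. Because $\rho_a(C_H)=C_H$, there is a $qv\times qv$ permutation matrix $P'_a$ with $P'_aE_{\phi(H)}N_a^*=E_{\phi(H)}$.

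The crux — and the step I expect to demand the most care — is descending $P'_a$ to a genuine $v\times v$ monomial matrix $M_a$. Partition the $qv$ rows of $E_{\phi(H)}$ into the orbits of the left scaling action $\rho\mapsto k\rho$, $k\in K$; this action is free, so there are exactly $v$ orbits, each of size $q$. Since the scalar matrices $kI$ are central in the group ring $\mathbb{Z}[K]$, right multiplication by $N_a^*$ commutes with the scaling and hence permutes these $v$ orbits. Moreover each orbit meets the top block $\phi(H)$ in exactly one row: the orbit of the $j$-th row $\phi(r_j)$ of $\phi(H)$ is $\phi(r_j+C_1)$, and $F_H$ is a transversal of $C_1$ in $C_H$ because two distinct rows of a $\GH(q,v/q)$ can never differ by a constant vector (the difference multiset would be constant, contradicting the $\GH$ property), so the $v$ rows of $H$ occupy all $v$ of the $C_1$-cosets. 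Recording the induced permutation of orbits as $P_a\in\mathcal{S}_v$ and the $K$-scalar shifting each orbit as a vector $k^{(a)}$ over $K$, set $M_a=D_{k^{(a)}}P_a$; then $M_a\phi(H)N_a^*=\phi(H)$, so $(M_a,N_a)\in\Aut(\phi(H))$. I would also note that $M_a$ is the \emph{unique} $v\times v$ monomial matrix with $M_a\phi(H)N_a^*=\phi(H)$: if $L\phi(H)=\phi(H)$ for a monomial $L$, then since distinct rows of $\phi(H)$ are not $K$-scalar multiples of one another (again the $\GH$ property), $L=I$. This uniqueness is what makes the assignment multiplicative.

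It then remains to check four points. \emph{Homomorphism:} propelinear axiom (ii) gives $\rho_{x\star y}=\rho_x\circ\rho_y$, which, transported to $E_{\phi(H)}$ and using $(AB)^*=B^*A^*$, yields $N_{x\star y}=N_xN_y$; then $M_xM_y$ is a $v\times v$ monomial matrix satisfying $(M_xM_y)\phi(H)(N_xN_y)^*=\phi(H)$, so $M_{x\star y}=M_xM_y$ by uniqueness, and $a\mapsto(M_a,N_a)$ is a homomorphism for the product $(M,N)(R,S)=(MR,NS)$. \emph{Injectivity:} $N_a=D_aQ_a$ determines $a$, via the unique diagonal--permutation factorization of a monomial matrix together with injectivity of $\phi$, so the map is injective and $(C,\star)\cong R$. \emph{The elements $(kI,kI)$:} for $a=\lambda\mathbf{1}$ one has $\pi_a=\mathrm{Id}$ and $D_{-a}=\phi(-\lambda)I$, whence $N_a^*=\phi(\lambda)I$ and then necessarily $M_a=\phi(-\lambda)I$; thus $(M_a,N_a)=(\phi(-\lambda)I,\phi(-\lambda)I)$, and as $\lambda$ runs over $\mathbb{F}_q$ these exhaust all $(kI,kI)$, $k\in K$. \emph{Transitivity:} $C_H$ being full propelinear is transitive on its $qv$ codewords, so $R$ is transitive on the rows of $E_{\phi(H)}=\phi(C_H)$; this action is compatible with the partition into $K$-orbits, and since each orbit contains exactly one row of $\phi(H)$, $R$ acts transitively on the rows of $\phi(H)$.
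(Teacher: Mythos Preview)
Your proposal is correct and follows essentially the same construction as the paper: build $E_{\phi(H)}$, realise $\rho_a$ as right multiplication by a monomial $N_a^{*}$, lift the resulting row permutation $P'_a$ to a $v\times v$ monomial $M_a$, and read off the conclusions. You simply fill in details the paper leaves implicit --- notably the uniqueness of $M_a$ (used to get multiplicativity), the explicit homomorphism and injectivity checks, and the transversal argument showing each $K$-orbit of rows meets $\phi(H)$ exactly once.
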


\begin{remark}
$R$  acts  transitively on rows of $\phi(H)$ since $\rho_a(x)=\rho_b(x)$ if and only if 
$a=b$ but not regularly since
$|R|\neq v$.
\end{remark}


Now, for a generalized Hadamard matrix $M$ with entries in $K$, $\mathrm{Aut}(M) \cong \mathrm{PAut}(\mathcal{E}_{M})$ where $\mathcal{E}_{M} = [k_{i}k_{j}M]_{0\leq i,j\leq q-1}$ (this is a special case of \cite[Theorem 9.6.14]{DF11}).  Where $\Theta: \mathrm{Aut}(M) \rightarrow \mathrm{PAut}(\mathcal{E}_{M})$ is the isomorphism outlined in \cite[pp. 110--111]{DF11}), we note that the center of $\mathrm{Aut}(M)$ contains the group of pairs of diagonal matrices $Z = \{(kI,kI) : k \in K\}$, and thus $\Theta(Z)$ is a central subgroup of $\mathrm{PAut}(\mathcal{E}_{M})$. { We require that} 
$\pi_{\lambda {\bf 1}} = Id_n$
in order for $C$ to be full propelinear. The transitivity requirement of the group $(C,\star)$ on $C$ for full propelinear codes then gives the following.

\begin{theorem}
$C$ is a generalized Hadamard full propelinear code if and only if  there is a subgroup $R \subseteq \mathrm{Aut}(\phi(H))$ with $Z \subseteq R$ such that $ \mathrm{PAut}(\mathcal{E}_{\phi(H)})$ contains a regular subgroup $\Theta(R)$, with $\Theta(Z) \subseteq \Theta(R)$.
\end{theorem}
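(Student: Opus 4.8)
The plan is to package together the two directions using the correspondence between isometries of $C_H$ and automorphisms of $\phi(H)$ that was set up just before the statement, together with the isomorphism $\Theta$ and Theorem~\ref{thm1}. The forward direction is essentially already done: if $C$ is a $\GHFP$-code, Theorem~\ref{thm1} produces a subgroup $R\subseteq\Aut(\phi(H))$ containing $Z=\{(kI,kI):k\in K\}$ with $(C,\star)\cong R$ and with $R$ acting transitively on the $v$ rows of $\phi(H)$. Applying $\Theta$ and using that $\Theta$ is a group isomorphism $\Aut(\phi(H))\to\PAut(\mathcal{E}_{\phi(H)})$, we get $\Theta(R)\subseteq\PAut(\mathcal{E}_{\phi(H)})$ with $\Theta(Z)\subseteq\Theta(R)$; and $\Theta(Z)$ is central, as noted. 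The only thing to check is \emph{regularity} of $\Theta(R)$ on the rows of $\mathcal{E}_{\phi(H)}$: there are $qv$ such rows (the $q$ blocks $k_i\phi(H)$, $0\le i\le q-1$, each with $v$ rows), and $|R|=|C|=qv$, so it suffices to show the action is transitive. Transitivity follows because $R$ already acts transitively on the $v$ rows of $\phi(H)$ while $\Theta(Z)$ moves the block index $k_i$ through all of $K$ (left multiplication of $\mathcal{E}_M$ by $kI$ sends block $k_i\phi(H)$ to $kk_i\phi(H)$); combining these two transitive actions gives transitivity on all $qv$ rows, hence regularity by the order count.

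For the converse, I would start from a subgroup $R\subseteq\Aut(\phi(H))$ with $Z\subseteq R$ such that $\Theta(R)$ is a regular subgroup of $\PAut(\mathcal{E}_{\phi(H)})$. Using the paragraph preceding Theorem~\ref{thm1} in reverse: each pair $(M,N)\in\Aut(\phi(H))$ with $M=D_{k}P_M$, $N=D_{\ell}P_N$ yields an isometry $(\sigma,\pi)\in\Aut(C_H)$ via $\sigma_i(u)=u+[D_N]_{ii}$ (i.e.\ subtracting the diagonal of $N$ appropriately) and $\pi=$ the permutation of $P_N$; this is exactly the ``associated isometry'' construction. Let $G$ be the image in $\Aut(C_H)$ of $R$ under this assignment. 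One checks it is a group homomorphism (composition in $\Aut(\phi(H))$ matches composition of associated isometries — this is the bookkeeping from the group law on $\Aut(\mathbb{F}_q^n)$ given in Section~2), and since $\Theta$ is injective and $\Theta(R)$ acts regularly on the $qv$ rows, $G$ has order $qv=|C_H|$ and acts transitively on $C_H$ (the blocks $k_i\phi(H)$ correspond precisely to the cosets $F_H+\alpha\mathbf{1}$, so transitivity on rows of $\mathcal{E}_{\phi(H)}$ is transitivity on codewords of $C_H$). By the Proposition characterizing propelinear codes (a group $(C,\star)$ with $\Aut(C)$ containing a regular transitive subgroup), $C_H$ is propelinear. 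Finally, the hypothesis $Z\subseteq R$ forces $\pi_{\lambda\mathbf{1}}=Id_n$ for the codewords $\lambda\mathbf{1}$ (since $(kI,kI)$ has trivial permutation part), and the ``no fixed coordinate'' condition for the other codewords follows from fixed-point-freeness of a regular action away from $Z$: if $\pi_a$ fixed a coordinate, one deduces (as in the earlier lemmas, e.g.\ via $\pi_x^{-1}(e_i)$) that $a\in C_1$. Hence $C_H$ is a $\GHFP$-code.

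\textbf{The main obstacle} I anticipate is the converse's bookkeeping: showing that $R\mapsto G\subseteq\Aut(C_H)$ really is a homomorphism and that the full-propelinear axioms (i) and (ii) hold, in particular verifying the ``no fixed coordinate'' property for $a\neq\lambda\mathbf{1}$. The subtlety is that a single codeword $a$ corresponds to a whole coset $aU$ of rows in $\mathcal{E}_{\phi(H)}$, and one must be careful that the regular action of $\Theta(R)$ on rows descends to a well-defined free action on codewords modulo $C_1$ — i.e.\ that the central subgroup $\Theta(Z)$ is exactly the stabilizer-compatible part making $\Theta(R)/\Theta(Z)$ act on $F_H$. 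I would handle this by invoking the lemmas already proved (that $C_1\subseteq\mathcal K(C)$, that $\Pi\cong C/C_1$, and the $\pi_x^{-1}(e_i)$ criterion) once propelinearity is in hand, so that the only genuinely new work is extracting the isometry group $G$ from $R$ and counting orders. The transitivity/regularity count on the $qv$ rows, and the compatibility of left multiplication by $kI$ with the block structure of $\mathcal{E}_{\phi(H)}$, are routine once stated carefully.
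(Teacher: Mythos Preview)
Your approach is essentially the paper's: the forward direction via Theorem~\ref{thm1} plus an order/transitivity count for regularity of $\Theta(R)$, and the converse by pulling each element of $R$ back to its associated isometry of $C_H$, then checking the propelinear axioms and that $\pi_{\lambda\mathbf{1}}=Id_n$ comes from $Z$. The paper phrases the transitivity step using a transversal $S$ of $Z$ in $R$ (so that $\Theta(z_is_j)$ sends row~$1$ to row $iq+j$), which is exactly your ``$R$ moves within a block, $\Theta(Z)$ moves between blocks'' argument written out.

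One caution on the converse. Your plan to obtain the ``no fixed coordinate'' condition for $a\notin C_1$ by invoking the earlier lemmas (the $\pi_x^{-1}(e_i)$ criterion, $\Pi\cong C/C_1$, etc.) is circular: those lemmas are stated and proved for $\GHFP$-codes, so they already assume what you are trying to establish. The paper's own proof of the converse in fact stops at showing propelinearity with $\sigma_a(x)=a+x$ and $\pi_{\lambda\mathbf{1}}=Id_n$, and does \emph{not} explicitly verify fixed-point-freeness of $\pi_a$ for $a\notin C_1$ either; so you have correctly located the delicate step, but the fix you propose does not work as stated. A non-circular way to close it, consistent with the surrounding material, is to observe that regularity of $\Theta(R)$ with central $\Theta(Z)$ forces $H$ to be cocyclic (the cited result of de~Launey--Flannery), and then use the explicit computation in the proof of Proposition~\ref{pfromctocg}, where $\pi_x$ is shown to act by $g_j\mapsto g^{-1}g_j$ on indices and is therefore fixed-point-free whenever $g\neq 1$.
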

\begin{proof}
Let $K = \{1=k_{0},\ldots,k_{q-1}\}$ and $Z = \{z_{i} = (k_{i}I,k_{i}I) : i \in \{0,\ldots,q-1\}\}$ and let $C$ be a generalized Hadamard full propelinear code.  Theorem \ref{thm1} gives that $(C,\star) \cong R \subseteq \mathrm{Aut}(\phi(H))$ where $Z \subseteq R$, and $R$ acts transitively on the rows of $\phi(H)$.  Since $Z$ is central and acts only by multiplication on rows of $\phi(H)$, there is a right transversal $S$ of $Z$ in $R$ where for any $j \in \{1,\ldots,n\}$ there is $s_j \in S$ such that $\phi(H)_{j} = (s_{j}\phi(H))_{1}$.  Thus $\Theta(z_{i}s_{j})$ permutes row 1 of $\mathcal{E}_{\phi(H)}$ to row $iq + j$, proving that $\Theta(R)$ is transitive on rows of $\mathcal{E}_{\phi(H)}$.  By Theorem \ref{thm1}, $|R| = |(C,\star)|$ and thus $\Theta(R)$ acts regularly.

Conversely,  assuming  that $H$ is generalized Hadamard over $\mathbb{F}_{q}$ and that there is a subgroup $R \subseteq \mathrm{Aut}(\phi(H))$ with $Z \subseteq R$ such that $\Theta(R) \subseteq \mathrm{PAut}(\mathcal{E}_{\phi(H)})$ is regular and $\Theta(Z) \subseteq \Theta(R)$.  Label the rows of $\mathcal{E}_{\phi(H)}$ with the codewords of $C_{H}$ in the order of the rows of $E_{H}$ such that the first $n$ entries of the row of $\mathcal{E}_{\phi(H)}$ are the entries in the codeword labelling the row.  For any $x \in C_{H}$ there is $(M_{x}N_{x}) \in \Theta(R)$ such that $M_{x}$ sends row $x$ to row $0$.  In the preimage of $\Theta$, $N_{x}$ corresponds to a monomial matrix $D_{-x}Q_{x}$.  For each $x$, let $\pi_{x}$ be coordinate the permutation according to the action of $Q^{*}$ on columns of $\phi(H)$, and let $\sigma_{x}(a) = a+x$ for all $a \in E_{H}$, (i.e., $\pi_x$ and $\sigma_x$ are determined by the column action of $N_x$).  It follows that if $(\pi_{x},\sigma_{x})\circ (\pi_{y},\sigma_{y}) = (\pi_{z},\sigma_{z})$ then $N_{x}N_{y} = N_{z}$. It also follows that $(\sigma_{x},\pi_{x})(0) = x$ for all $x$.

Then let $f : \Theta(R) \rightarrow C_{H}$ be the map such that $f(M_{x},N_{x}) = x$.  Clearly this map is bijective.  Further, where 
$\lambda \in \mathbb{F}_{q}$, it follows that $(M_{\lambda {\bf 1}},N_{\lambda {\bf 1}}) \in \Theta(Z)$, where $\pi_{\lambda {\bf 1}} = Id_n$.  Because $R \subseteq \mathrm{Aut}(\phi(H))$, it follows that $(\sigma_{x},\pi_{x})(C_{H}) = C_{H}$ for all $x$.  


Now observe that if $N_{x}N_{y} = N_{z}$, then $z = (\sigma_{z},\pi_{z})({\bf 0}) = (\sigma_{x},\pi_{x})(\sigma_{y},\pi_{y})({\bf 0}) = (\sigma_{x},\pi_{x})(y)$ and so $z = x \star y$.  Thus $f(M_{x},N_{x})\star f(M_{y},N_{y}) = x \star y = z = f(M_{z},N_{z})$, and so $f$ is a homomorphism and $C_{H}$ has a propelinear structure.
\end{proof}

Let $G$ be a group of order $n$ and let $\psi: G \times G \rightarrow K$ be a 2-cocycle.  Then let $E_{\psi}$ denote the canonical central extension of $K$ by $G$ obtained from $\psi$.  The following is a special case of \cite[Theorem 14.6.4]{DF11}.

\begin{theorem}
A generalized Hadamard matrix $H$ over $K$ is cocyclic with cocycle $\psi$
if and only if there exists a centrally regular embedding of $E_{\psi}$ into $\mathrm{PAut}(\mathcal{E}_H)$.
\end{theorem}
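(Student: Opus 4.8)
The plan is to obtain the theorem as the specialization of \cite[Theorem~14.6.4]{DF11} to the class of pairwise combinatorial designs consisting of the generalized Hadamard matrices over $K$; the reduction amounts to checking that (i)~a $\GH$ matrix over $K$ is a pairwise combinatorial design of the type treated in \cite{DF11}, (ii)~the expanded matrix $\mathcal{E}_H=[k_ik_jH]$ used here is the associated expanded design, so that $\mathrm{PAut}(\mathcal{E}_H)$ is the relevant group, (iii)~``$H$ cocyclic with cocycle $\psi$'' is the notion of a cocyclic design over $G$ with cocycle $\psi$, and (iv)~``centrally regular embedding of $E_\psi$ into $\mathrm{PAut}(\mathcal{E}_H)$'' is the hypothesis of \cite[Theorem~14.6.4]{DF11}; given the material already developed in this section these are bookkeeping. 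The common thread is the following observation. Index the rows and columns of $\mathcal{E}_H$ by the underlying set $K\times G$ of $E_\psi$ via $(k,g)\leftrightarrow k\,\sigma(g)$, where $\sigma(g)=(1,g)$ is the canonical transversal of $Z:=K\times\{1\}$ in $E_\psi$, and let $\rho\colon E_\psi\to K$, $\rho(k\,\sigma(g))=k$. Using $\sigma(g)\sigma(h)=\psi(g,h)\,\sigma(gh)$ and the block form of $\mathcal{E}_H$ one checks that $H$ is cocyclic with cocycle $\psi$ --- i.e.\ admits an indexing by $G$ with $(g,h)$-entry $\psi(g,h)$ --- if and only if in these coordinates the $(x,y)$-entry of $\mathcal{E}_H$ equals $\rho(xy)$ for all $x,y\in E_\psi$; the cocycle identity \eqref{CocycleIdentity} is what makes this work and absorbs the non-commutativity of $G$.

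For the forward implication I would proceed directly: assume $H$ is $\psi$-cocyclic, so $(\mathcal{E}_H)_{x,y}=\rho(xy)$. For fixed $a\in E_\psi$, replacing $x$ by $xa^{-1}$ and $y$ by $ay$ does not change $\rho(xy)$, so the pair consisting of right translation by $a^{-1}$ on rows and left translation by $a$ on columns belongs to $\mathrm{PAut}(\mathcal{E}_H)$; by surjectivity of $\Theta$ these are the images of genuine $K$-monomial automorphisms of $H$, in the spirit of Theorem~\ref{thm1}. The map sending $a$ to this pair is an injective homomorphism $E_\psi\hookrightarrow\mathrm{PAut}(\mathcal{E}_H)$; it acts regularly on the $qv$ rows because the right regular representation of $E_\psi$ is regular and $|E_\psi|=qv$; and for $a=(k,1)\in Z$ the induced permutation of rows is $(k',g)\mapsto(k'k^{-1},g)$, i.e.\ the regular action of $K$ on the $q$ blocks $k_0H,\dots,k_{q-1}H$ fixing the within-block position. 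Hence this is a centrally regular embedding.

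For the converse I would, given a centrally regular embedding $\Lambda\colon E_\psi\hookrightarrow\mathrm{PAut}(\mathcal{E}_H)$, use regularity on rows to label the rows of $\mathcal{E}_H$ by the elements of $E_\psi$ so that $\Lambda$ acts by translation. Central regularity identifies the $q$ row-blocks of $\mathcal{E}_H$ with the cosets of $Z$ (hence with $G=E_\psi/Z$), and $\Lambda(Z)$ acts on rows and on columns by mutually inverse block translations, mirroring the central pairs $(kI,kI)\in\mathrm{Aut}(H)$. Feeding this into the automorphism relations satisfied by $\Lambda$ and into the rigid block structure of $\mathcal{E}_H=[k_ik_jH]$ pins the matrix down to the form $(\mathcal{E}_H)_{x,y}=\rho_T(xy)$, where $\rho_T$ is the ``$K$-part'' map attached to a normalized transversal $T=\{t_g\}$ of $Z$ in $E_\psi$; by the observation above applied to $\psi_T(g,h)=t_gt_ht_{gh}^{-1}$, this says exactly that $H$ is cocyclic with cocycle $\psi_T$. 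Finally $E_{\psi_T}\cong E_\psi$ (as recalled in the Introduction), so $\psi_T$ and $\psi$ are cohomologous, and passing to a cohomologous cocycle does not affect cocyclicity; hence $H$ is cocyclic with cocycle $\psi$.

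I expect the converse to be the main obstacle, and within it the step of matching the combinatorial block structure of $\mathcal{E}_H$ against the coset structure of $Z$ in $E_\psi$ --- i.e.\ showing that the data carried by a centrally regular $\Lambda$ is precisely a normalized transversal of $Z$, so that the recovered function $\rho_T$ is an honest ``$K$-part'' map rather than an arbitrary map $E_\psi\to K$. This is exactly the content that \cite[Theorem~14.6.4]{DF11} packages in general, so in the write-up one may either reproduce that bookkeeping for the $\GH$ case or simply invoke the cited theorem once (i)--(iv) above are recorded; the forward direction and the cohomology step at the end are routine by comparison.
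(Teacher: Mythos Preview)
Your proposal is correct and aligns with the paper's treatment: the paper does not give an independent proof of this theorem but simply records it as a special case of \cite[Theorem~14.6.4]{DF11}, which is precisely the citation you invoke in step~(i)--(iv). Your additional direct sketch of both implications goes beyond what the paper supplies, but the essential approach---specializing the cited general result to the $\GH$ setting via the expanded matrix $\mathcal{E}_H$---is the same.
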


\begin{corollary}\label{principalcorollary}
The code $C_{H}$ comprised of the rows of $E_{H}$ is a generalized Hadamard full propelinear code if and only if the matrix $H$ is cocyclic over some cocycle $\psi$, with extension group $E_{\psi} \cong R \cong (C_{H},\star)$ where $R$ is a regular subgroup of $\mathrm{PAut}(\mathcal{E}_H)$.
\end{corollary}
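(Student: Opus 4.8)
The plan is to obtain the corollary by splicing together the two theorems that immediately precede it: the characterization of $\GHFP$-codes in terms of a regular subgroup $\Theta(R)$ of $\mathrm{PAut}(\mathcal{E}_{\phi(H)})$ containing $\Theta(Z)$, and the characterization of cocyclicity of $H$ in terms of a centrally regular embedding of the extension group $E_{\psi}$ into $\mathrm{PAut}(\mathcal{E}_H)$ (where, after identifying $\mathbb{F}_q$ with $K$ via $\phi$, the matrix $H$ over $K$ appearing in the latter theorem is $\phi(H)$, so $\mathcal{E}_H=\mathcal{E}_{\phi(H)}$). The bridge between the two statements is the observation that the distinguished central subgroup appearing in the notion of a centrally regular embedding of $E_{\psi}$ — namely the image of $K=K\times\{1\}\le E_{\psi}$ — is exactly $\Theta(Z)$, because $Z=\{(kI,kI):k\in K\}$ acts on $\mathcal{E}_H$ precisely as multiplication by $K$ on the $q$ row-blocks and column-blocks, which is the canonical central action, and we already know (from the paragraph preceding the cocyclicity theorem) that $\Theta(Z)$ is a central subgroup of $\mathrm{PAut}(\mathcal{E}_H)$.

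For the forward implication, assume $C_H$ is a $\GHFP$-code. By the first of the two preceding theorems there is $R\subseteq\mathrm{Aut}(\phi(H))$ with $Z\subseteq R$ such that $\Theta(R)$ is a regular subgroup of $\mathrm{PAut}(\mathcal{E}_{\phi(H)})$ with $\Theta(Z)\subseteq\Theta(R)$. Since $\Theta(Z)$ is the canonical central subgroup, $\Theta(R)$ is a centrally regular embedding of the abstract group $\Theta(R)$, so the cocyclicity theorem applies and $H$ is cocyclic, say with cocycle $\psi$, and $E_{\psi}\cong\Theta(R)$. By Theorem \ref{thm1} together with the first preceding theorem, $\Theta(R)\cong R\cong(C_H,\star)$, whence $E_{\psi}\cong R\cong(C_H,\star)$ with $R$ (via $\Theta$) a regular subgroup of $\mathrm{PAut}(\mathcal{E}_H)$, as claimed. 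Conversely, if $H$ is cocyclic with cocycle $\psi$, the cocyclicity theorem furnishes a centrally regular embedding of $E_{\psi}$ into $\mathrm{PAut}(\mathcal{E}_H)$; its image is a regular subgroup of $\mathrm{PAut}(\mathcal{E}_{\phi(H)})$ containing the canonical central subgroup $\Theta(Z)$, hence it equals $\Theta(R)$ for $R=\Theta^{-1}(\text{image})\subseteq\mathrm{Aut}(\phi(H))$ with $Z\subseteq R$. The first preceding theorem then yields that $C_H$ is a $\GHFP$-code, and again Theorem \ref{thm1} gives $(C_H,\star)\cong R\cong E_{\psi}$.

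The step I expect to be the main obstacle is the verification that the two notions of ``central'' coincide: that $\Theta(Z)$ is precisely the image of the distinguished central subgroup $K\le E_{\psi}$ used in the definition of a centrally regular embedding, and, running the cocyclicity theorem in reverse, that the regular subgroup it produces really does contain $\Theta(Z)$ rather than some other central subgroup of order $q$. This amounts to unwinding the explicit description of $\Theta$ from \cite[pp.~110--111]{DF11} and checking that $(kI,kI)$ is sent to the permutation of $\mathcal{E}_H$ that shifts the $q$ row-blocks (and column-blocks) in the way the central copy of $K$ acts in the canonical extension $E_{\psi}$. Once this identification is in place, the rest is a routine composition of the two cited theorems with Theorem \ref{thm1}, with the normalization of $H$ (so that the relevant transversals are normalized) handled as remarked earlier in the paper.
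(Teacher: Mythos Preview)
Your proposal is correct and follows exactly the approach the paper intends: the paper states this result as a corollary with no proof, relying on the reader to combine the preceding characterization of $\GHFP$-codes via a regular subgroup $\Theta(R)\subseteq\mathrm{PAut}(\mathcal{E}_{\phi(H)})$ containing $\Theta(Z)$ with the cited theorem from \cite{DF11} characterizing cocyclicity via a centrally regular embedding of $E_\psi$. Your identification of the one nontrivial point --- that $\Theta(Z)$ coincides with the image of the canonical central subgroup $K\times\{1\}$ in the embedding, which must be checked from the explicit description of $\Theta$ in \cite[pp.~110--111]{DF11} --- is apt, and is precisely the glue the paper leaves implicit.
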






\begin{remark}
We observe that a generalized Hadamard matrix $H$ may be cocyclic over several distinct cocycles $\psi$, and that the extension groups $E_{\psi}$ are not necessarily isomorphic.  As such, given a cocyclic generalized Hadamard matrix $H$, there may be several codes $(C_{H},\star)$ that are equal setwise, i.e., they contain the same set of codewords, but are not isomorphic as groups.
\end{remark}

In what follows, we will make explicit the correspondence  between the elements of $E_\psi$ and $(C_{H},\star)$.

Assuming $\psi\in Z^2(G,K)$.
For a fixed order in $G=\{g_0=1,g_1,\ldots,g_{v-1}\}$ and in $K=\{k_0=1,k_1,\ldots,k_{q-1}\}$  (we recall that $K$ denotes the multiplicative group isomporphic to the additive elementary abelian group $\mathbb{F}_q$), we can define the following map:
$$
\Phi\colon E_\psi\rightarrow K^v
$$
given an element $(k,g)\in E_\psi$,
$$[\Phi(k,g)]_j=k_l, \quad \mbox{if $(k,g)^{-1}t_j\in T(\psi)\,(k_l,1)$,}
     $$
where $T(\psi)=\{(t_0=(1,1),t_2=(1,g_1),\ldots,t_{v-1}=(1,g_{v-1})\}$. Obviously,  $T(\psi)\,(c_i,1)=(c_i,1)T(\psi)$ and $\Phi$ is well-defined. After some calculations,
$$[\Phi(k,g)]_j=(k\psi(g,g^{-1}))^{-1} \,\psi(g^{-1},g_j).$$
 Hence, $\Phi(k,g)$ is equal to $(k\psi(g,g^{-1}))^{-1}$-times the row of $M_\psi$ indexed with the element $g^{-1}$.
 
  Clearly, $\Phi$ is an injective map. The inverse of $\Phi$ (over the Im$\,\Phi$) is
 $$
 \Phi^{-1}(\lambda\,(\psi(g,g_1),\ldots,\psi(g,g_v)))=
 ((\lambda \,\psi(g^{-1},g))^{-1},\,g^{-1}),
 $$
 where $\lambda\in K$ and $g\in G$.
 \begin{proposition} \label{pfromctocg}
If $\psi\in Z^2(G,K)$ is orthogonal  then $C=(\Phi(E_\psi),\star)$ is a $\GHFP$-code where
 $x\star y= \Phi(\Phi^{-1}(x)\cdot\Phi^{-1}(y)) $ with $x,y\in \Phi(E_\psi)$.
 \end{proposition}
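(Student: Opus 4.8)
The plan is to put on $C=\Phi(E_\psi)$ the propelinear structure coming from $\star$ and verify directly that it is full, once two essentially formal points are recorded. First, since $\Phi$ is a bijection of $E_\psi$ onto $C$ and $\star$ is the multiplication of $E_\psi$ transported through $\Phi$, $(C,\star)$ is a group with $(C,\star)\cong E_\psi$; its identity is $\Phi(1,1)$, which by the displayed formula for $\Phi$ and normalization of $\psi$ is the all-identity vector, i.e.\ $\mathbf 0$ in additive notation, so $\mathbf 0\in C$. Second, for fixed $g$ and varying $k$ the element $\Phi(k,g)$ runs through all $K$-scalar multiples of the row of $M_\psi$ indexed by $g^{-1}$; hence, written additively via $\phi^{-1}$, $C$ is exactly the generalized Hadamard code $C_H$ with $H=\phi^{-1}(M_\psi)$, and $M_\psi$ is a normalized $\GH(q,v/q)$ by Theorem~\ref{ghequiv} because $\psi$ is orthogonal. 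So it remains only to exhibit the full propelinear structure.

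The core is a single identity in $E_\psi$. For $e=(k,g)\in E_\psi$ let $\mu_e$ be the permutation of the index set determined by $g_{\mu_e(j)}=g^{-1}g_j$. I would first prove, straight from the defining relation $(e^{-1}t_j)\bigl([\Phi(e)]_j^{-1},1\bigr)\in T(\psi)$ and the centrality of $K\times\{1\}$ in $E_\psi$, that
\[
e^{-1}t_j=\bigl([\Phi(e)]_j,\,1\bigr)\,t_{\mu_e(j)} .
\]
Applying this twice, and using centrality again, gives for all $e,f\in E_\psi$
\[
(ef)^{-1}t_j=f^{-1}e^{-1}t_j=\bigl([\Phi(e)]_j\,[\Phi(f)]_{\mu_e(j)},\,1\bigr)\,t_{(\mu_f\circ\mu_e)(j)} ;
\]
since $\mu_{ef}=\mu_f\circ\mu_e$, comparison with the identity applied to $ef$ yields $[\Phi(ef)]_j=[\Phi(e)]_j\,[\Phi(f)]_{\mu_e(j)}$. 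Writing $x=\Phi(e)$, $y=\Phi(f)$ this reads $[x\star y]_j=x_j\cdot y_{\mu_e(j)}$, so the left translation $\rho_x\colon y\mapsto x\star y$ on $C$ is the restriction to $C$ of the isometry $(\sigma_x,\pi_x)\in\Aut(\mathbb F_q^n)$ with $\sigma_x$ the componentwise addition of $x$ and $\pi_x=\mu_e^{-1}$ (the coordinate permutation induced by left multiplication by $g$), and crucially $\sigma_x$ and $\pi_x$ depend on $x$ only.

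From here the verifications are routine. Each $(\sigma_x,\pi_x)$ lies in $\Aut(\mathbb F_q^n)$, fixes $C$ (as $\rho_x$ permutes $C$), and sends $\mathbf 0$ to $x$, and $x\mapsto(\sigma_x,\pi_x)$ is a homomorphism (property (ii) of a propelinear code follows from $\mu_{ef}=\mu_f\circ\mu_e$ together with the commutativity of $K$); thus $\{(\sigma_x,\pi_x):x\in C\}$ is a regular subgroup of $\Aut(C)$ and $(C,\star)$ is propelinear. For fullness, $\sigma_x$ is addition of $x$ by construction, and $\pi_x=\mu_e^{-1}$ has a fixed coordinate iff $g=1$, which by the previous paragraph is equivalent to the row of $M_\psi$ indexed by $g^{-1}$ being constant; for a normalized $\GH(q,v/q)$ with $q\geq 2$ the only constant row is the all-identity first row, so $g=1$ is equivalent to $x=\Phi(k,1)=k^{-1}\mathbf 1$ being a constant vector $\alpha\mathbf 1$, and in that case $\pi_x=\mu_e^{-1}=Id_n$. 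Hence $C$ is a generalized Hadamard code that is full propelinear, i.e.\ a $\GHFP$-code, with the structure induced by $\star$.

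\emph{Main obstacle.} The whole argument rests on the identity $e^{-1}t_j=([\Phi(e)]_j,1)\,t_{\mu_e(j)}$ and its iteration; everything else is bookkeeping. This identity is forced by the definition of $\Phi$ and the centrality of $K\times\{1\}$ in $E_\psi$, but the point one must genuinely check is that the factor $x_j$ and the permutation $\mu_e$ occurring in $[x\star y]_j=x_j\,y_{\mu_e(j)}$ depend only on $x$ and not on $y$ — that separation of variables \emph{is} the propelinear property. (Alternatively one could expand $\Phi$ through $\psi$ and derive the needed relation among cocycle values from uniqueness of inverses in $E_\psi$, but the central-element route is cleaner.) A secondary nuisance is keeping the index/group-element dictionary and the convention $f\circ g(v)=f(g(v))$ consistent, which is what decides whether $\pi_x$ is $\mu_e$ or $\mu_e^{-1}$.
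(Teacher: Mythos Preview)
Your proof is correct and reaches the same destination as the paper --- namely, that $\pi_x$ is the coordinate permutation induced by left multiplication in $G$ by the $G$-component of $\Phi^{-1}(x)$ --- but by a genuinely different route. The paper expands $\Phi$, $\Phi^{-1}$, and the extension-group product explicitly in terms of cocycle values, computes $[x\star y]_j$ and $[\pi_x(y)]_j$ directly, and then simplifies via three separate consequences of the cocycle identity~(\ref{CocycleIdentity}) to arrive at $[\pi_x(y)]_j=\mu\,\psi(h,gg_j)$; the composition law $\pi_x\pi_y=\pi_{x\star y}$ is then verified by a further direct manipulation. You instead encode the very definition of $\Phi$ as the single group-theoretic identity $e^{-1}t_j=([\Phi(e)]_j,1)\,t_{\mu_e(j)}$ in $E_\psi$ and iterate it once using centrality, so that both the separation $[x\star y]_j=x_j\cdot y_{\mu_e(j)}$ and the composition $\mu_{ef}=\mu_f\circ\mu_e$ fall out without ever writing down a $\psi$-value. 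Your approach is cleaner and more robust to indexing conventions; the paper's computation, while heavier, makes transparent exactly which cocycle identities are doing the work and yields the explicit description $\pi_x(l)=j\Leftrightarrow g_l=gg_j$ that is quoted later in the proof of the Kronecker-sum proposition (your $\pi_x=\mu_e^{-1}$ gives the same description after matching the two uses of the letter $g$).
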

 \begin{proof}
  Firstly, we will show that $\pi_x\in {\cal S}_v$ where
     $ \pi_x(y)=x\star y-x.$
 We know that every codeword has to be a multiple of a row of $M_\psi$. We take $x=\lambda\,(\psi(g,g_1),\ldots,\psi(g,g_v))$ and $y=\mu\,(\psi(h,g_1),\ldots,\psi(h,g_v))$.
 By a routine   computation, we get that 
 $$[x\star y]_j=\lambda\mu \psi(g^{-1},g)\psi(h^{-1},h)\left(\psi(g^{-1},h^{-1}) \psi((hg)^{-1},hg)\right)^{-1}\psi(hg,g_j).$$
 Putting together,
 $$\begin{array}{rcl}
     [\pi_x(y)]_j & = & [x\star y]_j-[x]_j  \\
     & =&  
     \mu \psi(g^{-1},g)\psi(h^{-1},h)\left(\psi(g^{-1},h^{-1}) \psi((hg)^{-1},hg)\right)^{-1} \psi(hg,g_j)
     (\psi(g,g_j))^{-1}\\[2mm]
     & = & \mu\psi(h,g g_j). 
   \end{array} $$ 
    In the last identity we have used these properties coming from (\ref{CocycleIdentity})
    \begin{itemize}
        \item $\psi(hg,g_j)
     (\psi(g,g_j))^{-1}=\psi(h,g g_j)(\psi(h,g))^{-1}.$
     \item $\psi(h^{-1},h)(\psi(h,g))^{-1}=\psi(h^{-1},hg).$
     \item $\psi(g^{-1},h^{-1})\psi(g^{-1}h^{-1},hg)=\psi(g^{-1},g)\psi(h^{-1},hg).$
    \end{itemize}
 Hence, the map $\pi_x$ is an element of ${\cal S}_v$. Specifically, for any $y$, $\pi_x$ moves the $l$-th coordinate of $y$ to $j$-th coordinate where $g_l=g g_j$.  As a consequence of this fact, it is immediate that if $x=\lambda {\bf 1},\,\,\lambda\in\mathbb{F}_q$ then $\pi_x=Id_v$ since $g=1$ the identity of $G$. Furthermore, if $g\neq 1$ (or equivalently $x\neq\lambda {\bf 1}$), then $\pi_x$ has not any fixed coordinate.
 
 Secondly, we show an important property of these permutations. Concretely, given $x,y \in C$, we have that  $\pi_x\pi_y=\pi_{x\star y}$. To prove it, let $z$ be an element of $C$ then
 $$\begin{array}{lcl}\pi_{x\star y}(z) & = & (x\star y)\star z-x\star y \\
 & = & x\star (y\star z)-x\star y \\
 & = & x+\pi_x(y\star z)-\pi_x(y)-x\\
 & = & \pi_x(y\star z-y)
 =\pi_x(\pi_y(z)).
 \end{array}$$ 
\end{proof}
 
Let $H$ be a normalized Hadamard matrix $\GH(q,v/q)$ over $\mathbb{F}_q$ and $f$ be any row of $H$. $D_j$  denotes the subset of $C_H$ such that $x\in D_j$   
if $[x]_j = 0 \in \mathbb{F}_{q}$. 
Let us observe  the following facts:
 \begin{enumerate}
          \item $\displaystyle
     D_j=\cup_{\alpha\in \mathbb{F}_q} \{f+(-\alpha){\bf 1}\,\colon\,f\in F_H \;\wedge\; [f]_j=\alpha\}
     $.
     \item $D_1=F_H.$
     \item For $j>1$, $|\{f\in F_H\,\colon\, [f]_j=\alpha\}|=v/q$. 
     Since
     $\mathbb{F}_q$ is abelian then $H^T$ is a $\GH(q,v/q$) (over $\mathbb{F}_q$)  too \cite[Lemma 4.10]{Hor07}. Thus, the number of entries equal to $\alpha$ in the $j$-th column of $H$    is $v/q$, for all $\alpha\in \mathbb{F}_q$.
     \item  $|D_j|= v$ and $C=\cup_{i\geq 1} D_i$.
 \end{enumerate}

 \begin{proposition}\label{GHFP-SDR}
 Let $(C,\star)$ be a $\GHFP$-code of length $v$ over $\mathbb{F}_q$  coming from $H$ a $\GH(q,v/q)$. Then $F_H=D_1$ is a (central) relative  $(v,q,v,v/q)$-difference set in $C$ relative to the normal subgroup $C_1=\{\alpha{\bf1}\colon \alpha\in \mathbb{F}_q\}\cong \mathbb{F}_q$.
 \end{proposition}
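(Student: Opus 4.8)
The plan is to show directly that the set $D_1 = F_H$ satisfies the defining property of a relative difference set inside the group $(C,\star)$. First I would recall, via Proposition \ref{pfromctocg} and Corollary \ref{principalcorollary}, that $(C,\star)$ is (isomorphic to) the extension group $E_\psi$ of the orthogonal cocycle $\psi$ associated to $H$, and that under the map $\Phi$ the subgroup $C_1=\{\alpha{\bf 1}\colon \alpha\in\mathbb{F}_q\}$ corresponds precisely to $K\times\{1\}\cong\mathbb{F}_q$, which is central in $E_\psi$; this makes $C_1$ normal (indeed central) in $(C,\star)$ and has order $q$, accounting for the forbidden-subgroup part of the parameters. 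It remains to verify that $F_H$, which by the observations above has size $v$, is the image of the transversal $T(\psi)=\{(1,g)\colon g\in G\}$ under $\Phi$: since $\Phi(1,g)$ is $(\psi(g,g^{-1}))^{-1}$-times the row of $M_\psi$ indexed by $g^{-1}$, and $\psi$ is normalized, for $g$ ranging over $G$ these are exactly (scalar multiples of) the rows of $M_\psi$, hence exactly the rows of $H$ up to the normalization we are allowed to assume; so $\Phi(T(\psi))=F_H=D_1$.

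Then I would invoke Theorem \ref{ghequiv}, whose third clause states precisely that $\psi$ orthogonal implies $T(\psi)$ is a central relative $(v,w,v,v/w)$-difference set in $E_\psi$ relative to $U\times\{1\}$, here with $w=q$ and $U=K\cong\mathbb{F}_q$. Transporting this statement across the group isomorphism $\Phi\colon E_\psi\to (C,\star)$ — which carries $T(\psi)$ to $F_H$ and $U\times\{1\}$ to $C_1$ — yields immediately that $F_H=D_1$ is a central relative $(v,q,v,v/q)$-difference set in $(C,\star)$ relative to $C_1$, as claimed.

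Alternatively, for a self-contained argument that does not route through the cocycle at all, I would argue combinatorially in $(C,\star)$ directly: for distinct $f_1,f_2\in F_H$ one computes $f_1\star f_2^{-1}$ and shows, using the explicit description of $\pi_x$ from the proof of Proposition \ref{pfromctocg} and the fact that the rows of $H$ pairwise differ in exactly $v-v/q$ coordinates (equivalently each nonzero difference $f_i-f_j$ takes each value of $\mathbb{F}_q$ exactly $v/q$ times by the $\GH$ property applied to $H$ and to $H^T$), that $f_1\star f_2^{-1}\notin C_1$ and that each element of $C\setminus C_1$ arises exactly $v/q$ times as such a quotient. The bookkeeping here amounts to counting, for a fixed target codeword $z\notin C_1$, the number of pairs $(f_1,f_2)$ with $f_1\star f_2^{-1}=z$; writing $z = c\star f^{-1}$-style relations and using regularity-type facts about $\Pi\cong C/C_1$ reduces this to the column-distribution count in observation (3).

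The main obstacle I expect is purely notational/bookkeeping: matching up the several identifications — $(C,\star)\cong E_\psi$ via $\Phi$, the scalar factor $(\psi(g,g^{-1}))^{-1}$ that appears when $\Phi$ is applied to a transversal element, and the assumption that $H$ (hence $M_\psi$) is normalized — so that the image of $T(\psi)$ is literally $F_H$ rather than merely a row-scaled copy of it. Once that identification is pinned down cleanly, the difference-set property is a direct transport of Theorem \ref{ghequiv}(3) and requires no new computation; the only point needing a word of care is that $C_1$, being the image of the central subgroup $K\times\{1\}$, is genuinely \emph{central} (not just normal) in $(C,\star)$, which justifies the word ``central'' in the statement.
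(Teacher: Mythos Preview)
Your primary route has a genuine gap, not merely a bookkeeping one. You correctly note that $\Phi(1,g)$ equals $(\psi(g,g^{-1}))^{-1}$ times the row of $M_\psi$ indexed by $g^{-1}$, but then appeal to ``$\psi$ normalized'' to identify this with the row itself. Normalization gives only $\psi(g,1)=\psi(1,g)=1$; it does \emph{not} force $\psi(g,g^{-1})=1$. Consequently $\Phi(T(\psi))$ is a transversal of $C_1$ in $C$, but in general a \emph{different} one from $F_H=D_1$: each $\Phi(1,g)$ lies in the $C_1$-coset of row $g^{-1}$, shifted by a $g$-dependent constant. And the relative-difference-set property does not pass from one transversal of a central subgroup to another: two transversals determine cohomologous cocycles, and orthogonality---hence the RDS property via Theorem~\ref{ghequiv}---is not a cohomology-class invariant (Example~\ref{planar_exa} already exhibits orthogonal coboundaries, while the trivial cocycle cohomologous to them is never orthogonal). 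So transporting Theorem~\ref{ghequiv}(3) through $\Phi$ shows that $\Phi(T(\psi))$ is a central RDS in $(C,\star)$, not that $F_H$ is; the ``scalar factor'' you flagged is exactly the obstruction, and it is not removable by normalization.

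The paper sidesteps this by arguing directly inside $(C,\star)$, with no cocycle in sight: it verifies the equivalent condition $|F_H\cap (x\star F_H)|=v,\;0,\;v/q$ according as $x={\bf 0}$, $x\in C_1\setminus\{{\bf 0}\}$, or $x\notin C_1$, using only the full-propelinear axioms and the fact (observation~(3) preceding the proposition) that each nontrivial column of $H$ contains every element of $\mathbb{F}_q$ exactly $v/q$ times. Your ``alternative'' combinatorial sketch is in this spirit and can be made to work; the crisp step you are missing is the identity $x\star F_H = D_j - \alpha_0{\bf 1}$, where $j=\pi_x(1)\neq 1$ and $\alpha_0$ is chosen so that $[x+\alpha_0{\bf 1}]_j=0$, which reduces $|F_H\cap(x\star F_H)|$ to the single column count $|\{f\in F_H:[f]_j=-\alpha_0\}|=v/q$.
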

 \begin{proof}

      $C_1$ is a central subgroup.
     We have to prove:
     $$|F_H\cap x\star F_H|=\left\{\begin{array}{cl}
     v &\quad x={\bf 0}\\
     0 &\quad x\in C_1\setminus\{{\bf 0}\}\\
     v/q & \quad x\in C\setminus C_1
     \end{array}\right.$$
     \begin{itemize}
     \item Let us observe that if $x\in C_1$ then $\pi_x=Id_v$. Now, if $f\in F_H$ then the first entry of $x\star f=x+ f$ is $0$ if and only if $x={\bf 0}$. So, we concluded with the desired result for the first and the second identities. 
     \item Let $x\notin C_1$ and $\pi_x(1)=j,\,$    ($j\neq 1$ since it is full propelinear).

     Let $\alpha_0\in \mathbb{F}_q$ be such that $[x+\alpha_0{\bf 1}]_j=0.$ Since $(x+\alpha_0{\bf 1})\star f\in D_j$ for all $f\in F_H$ and $|y\star F_H|=v$ for all $y\in C_H$ then $(x+\alpha_0{\bf 1})\star F_H=D_j$. As a consequence, 
$$x\star F_{H}=D_j-\alpha_0 {\bf 1}.$$     

     Therefore,
     $|F_H\cap x\star F_H|=$ number of entries equal to $-\alpha_0$ in the $j$-th column of $H$ what it is equal to $v/q.$
     This conclude the proof.

     \end{itemize}
     
 \end{proof}

 \begin{corollary}\label{corolariofundamental}
 Let $(C,\star)$ be a $\GHFP$-code of length $v$ over $\mathbb{F}_q$  coming from $H$ a $\GH(q,v/q)$. Let $G=C/C_1$ and $\sigma(f\star C_1)=f$ for $f\in F_H$. The map
 $\psi_{F_H}\colon G\times G \rightarrow K$ defined by
 $$\psi_{F_H}(g,h)=k,\quad \mbox{if}\,\sigma(g)\star\sigma(h)\in k{\bf 1}\star F_H $$
 is  an orthogonal cocycle,  i.e. $M_{\psi_{F_H}}$ is a $\GH(w,v/w$). Furthermore, $(C,\star)\cong E_{\psi_{F_H}}$ where $F_H^\star=\{(1,g)\colon g\in G\}$ is the isomorphic image of $F_H$.

 
 \end{corollary}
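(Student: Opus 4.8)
The plan is to read this corollary as a direct instance of the transversal-to-cocycle correspondence recalled in the introduction, with $E=(C,\star)$, $U=C_1$ and $T=F_H$, the orthogonality part being supplied by Proposition \ref{GHFP-SDR} together with Theorem \ref{ghequiv}. The first step is to check that $F_H$ is a \emph{normalized transversal} of $C_1$ in $(C,\star)$. It contains ${\bf 0}$, the identity of $(C,\star)$, because $H$ is normalized; and it meets every coset of $C_1$ exactly once: if $f,f'\in F_H$ lie in the same coset then $f=x\star f'$ for some $x\in C_1$, so $f\in F_H\cap(x\star F_H)\neq\emptyset$, and Proposition \ref{GHFP-SDR} forces $x={\bf 0}$, hence $f=f'$; since $|F_H|=v=|C/C_1|$, the section $\sigma(f\star C_1)=f$ is well defined. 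I would also record that $C_1$ is central in $(C,\star)$ (for $\alpha{\bf 1}\in C_1$ one has $\pi_{\alpha{\bf 1}}=Id_v$ and $\sigma_{\alpha{\bf 1}}(x)=\alpha{\bf 1}+x$, so $\alpha{\bf 1}\star x=\alpha{\bf 1}+x=x+\alpha{\bf 1}=x\star\alpha{\bf 1}$), whence the sets $\{k{\bf 1}\star F_H:k\in K\}$ partition $C$ and the value $\psi_{F_H}(g,h)$ is well defined.

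Next I would identify $\psi_{F_H}$ with the cocycle $\psi_T$ of the introduction for this choice of $E,U,T$. Unwinding the definition: $\sigma(g)\star\sigma(h)$ lies in the coset $gh$ (since $\sigma$ is a section of $C\to C/C_1=G$), and $\sigma(gh)$ is the unique element of $F_H$ in that coset, so $\sigma(g)\star\sigma(h)\star\sigma(gh)^{-1}\in C_1$; thus $\sigma(g)\star\sigma(h)\in k{\bf 1}\star F_H$ exactly when $k{\bf 1}=\sigma(g)\star\sigma(h)\star\sigma(gh)^{-1}$, i.e. $\psi_{F_H}(g,h)\,{\bf 1}=\sigma(g)\sigma(h)\sigma(gh)^{-1}$ under the identification $C_1\cong K$ given by $\alpha{\bf 1}\leftrightarrow\phi(\alpha)$. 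Hence $\psi_{F_H}=\psi_T\in Z^2(G,K)$ and, by the introduction, $E_{\psi_{F_H}}\cong(C,\star)$, with the canonical transversal $T(\psi_{F_H})=\{(1,g):g\in G\}$ corresponding to $F_H$. This isomorphism can be written explicitly as $\alpha{\bf 1}\star f\mapsto(\phi(\alpha),f\star C_1)$ for $\alpha\in\mathbb{F}_q$, $f\in F_H$: a routine check using centrality of $C_1$ and the fact that $\phi$ carries addition to multiplication shows it turns $\star$ into the twisted multiplication $(u,g)(u',g')=(uu'\psi_{F_H}(g,g'),gg')$ of $E_{\psi_{F_H}}$, and by construction it sends $F_H$ onto $F_H^\star$.

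For orthogonality, I would transport the difference-set structure. By Proposition \ref{GHFP-SDR}, $F_H$ is a central relative $(v,q,v,v/q)$-difference set in $(C,\star)$ relative to $C_1$; applying the isomorphism of the previous step, $T(\psi_{F_H})=\{(1,g):g\in G\}$ is a central relative $(v,q,v,v/q)$-difference set in $E_{\psi_{F_H}}$ relative to $K\times\{1\}$. This is exactly statement 3 of Theorem \ref{ghequiv} for $\psi=\psi_{F_H}$, so statements 1 and 2 hold: $\psi_{F_H}$ is orthogonal and $M_{\psi_{F_H}}$ is a (normalized) $\GH(q,v/q)$. Combined with $E_{\psi_{F_H}}\cong(C,\star)$ and $F_H\mapsto F_H^\star$, this is the assertion of the corollary.

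\textbf{Main obstacle.} The only place where care is genuinely needed is the bookkeeping in the middle step: verifying that the membership condition ``$\sigma(g)\star\sigma(h)\in k{\bf 1}\star F_H$'' translates precisely into the factor-set formula $\sigma(g)\sigma(h)\sigma(gh)^{-1}$, keeping the additive $\mathbb{F}_q$-picture on $C_1$ straight against the multiplicative $K$-picture in which cocycles and $E_\psi$ are written (so that $\psi_{F_H}$ really takes values in $K$), and confirming that $\alpha{\bf 1}\star f\mapsto(\phi(\alpha),f\star C_1)$ is a genuine group isomorphism $(C,\star)\to E_{\psi_{F_H}}$. Once this identification is in place the statement is immediate from Proposition \ref{GHFP-SDR} and Theorem \ref{ghequiv}.
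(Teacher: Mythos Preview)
Your proposal is correct and follows essentially the same approach as the paper. The paper's own proof is the single line ``It is a consequence of \cite[Theorem~3.1]{PH98} and Proposition~\ref{GHFP-SDR}''; you have simply unpacked what that citation contains, namely the transversal-to-cocycle construction $\psi_T$ recalled in the introduction together with the difference-set $\Leftrightarrow$ orthogonal-cocycle equivalence (you invoke it via Theorem~\ref{ghequiv}, which is \cite[Theorem~4.1]{PH98}, rather than the closely related \cite[Theorem~3.1]{PH98}, but the content is the same).
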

 \begin{proof}
 It is a consequence of \cite[Theorem 3.1]{PH98}  and Proposition \ref{GHFP-SDR}.
 \end{proof}

\section{Examples}
In this section, we provide some examples of generalized Hadamard full propelinear codes coming from cocyclic generalized Hadamard matrices. The last one has a special interest since this family is not linear. We will study their rank and the dimension of their kernel. In \cite{DRV16} the study of the rank and dimension of the kernel of codes coming from generalized Hadamard matrices was initiated.  We begin with a definition of an infinite family of cocyclic generalized Hadamard matrices.

\begin{definition} {\upshape \cite[Section 9.2]{DF11}
Let $q = p^m$ be a prime power and denote the $k$-dimensional vector space over $\mathbb{F}_{q}$ by $V$. Then 
\[
D_{(p,m,k)} = [xy^{\top}]_{x,y \in V}
\]
is a $\GH(q,q^{k-1})$. These are known as the \emph{generalized Sylvester matrices}.}
\end{definition}

It is well known that the generalized Sylvester matrices are cocyclic, see \cite[p.122]{Hor07} for example. They were analyzed in terms of their cocyclic development in \cite{EF17}.  The analysis shows that these matrices have several non-isomorphic indexing and extension groups, and the number of non-isomorphic indexing and extension groups grows with $k$ and $m$.  They are closely related to the regular subgroups of the affine general linear group $\mathrm{AGL}_{k+1}(V)$.  Hence the matrix $H = D_{(p,m,k)}$ of order $q^k$ is cocyclic with multiple cocycles $\psi$ and has multiple non-isomorphic extension groups $E_{\psi}$ of order $q^{k+1}$.  As such, for each $\psi$ the associated codes $(C,\star)$ each have the same set of codewords (the rows of $E_{H}$), but are non-isomorphic as groups.  Some of the examples below are members of the generalized Sylvester matrices.

\begin{example}
If $G = U = \langle a,b\mid
a^{2}=b^2=(ab)^2=1\rangle \cong {\bf Z}_2^2$ (the additive group of $\mathbb{F}_{4}$ but with multiplicative notation)  with indexing $\{1, a, b, ab\}$, then
the $G$-cocyclic matrix with coefficients in $U$
$$H=\left(\begin{array}{cccc}
1 & 1 & 1 & 1\\
1 & a & ab & b\\
1 & ab & b & a \\
1 & b & a & ab
\end{array} \right)
$$
is a generalized Hadamard matrix, $\GH(4,1)$, with entries in $\mathbb{F}_4$. 

\vspace{0.4cm}

Now, set $C_i=\{f_i+ \alpha {\bf 1}\mid \alpha \in G\}$, where $f_i$ denotes the vector corresponding to the $i$-th row of $H$ and ${\bf 1}$ denotes the all-one
 vector. (We will follow this notation in the sequel examples). For instance, 
 $$C_1=\{(1,1,1,1), (a,a,a,a), (b,b,b,b), (ab,ab,ab,ab)\}.$$

\vspace{0.4cm}

The generalized Hadamard  code over $U$
$$C=C_1\cup C_2\cup C_3\cup C_4$$
can be endowed with a {\bf full propelinear structure} with the following group $\Pi$ of permutations
$$\pi_x=\left\{
\begin{array}{cc}
I & \quad x\in C_1\\
(1,2)(3,4)& \quad x\in C_2\\
(1,3)(2,4)&\quad x\in C_3\\
(1,4)(2,3)& \quad x\in C_4
\end{array}
\right.$$
That is, $x\star y=x+\pi_x(y)$ where $(C,\star)\cong {\bf Z}_4^2$ and $\Pi\cong {\bf Z}_2^2$. The rank and the dimension of the kernel of this code are 2.
\end{example}

\begin{example}If $G = {\bf Z}_3^2$ with indexing $\{(0,0),(0,1),(0,2),(1,0),(1,1),(1,2),$ $(2,0),(2,1),(2,2)\}$, then
the $G$-cocyclic matrix over $\mathbf{Z}_3$
$$H=\left(\begin{array}{ccccccccc}
0 & 0 & 0 & 0 & 0 & 0 & 0 & 0 & 0\\
0 & 1 & 2 & 0 & 1 & 2 & 0 & 1 & 2 \\
0 & 2 & 1 & 0 & 2 & 1 & 0 & 2 & 1  \\
0 & 0 & 0 & 1 & 1 & 1 & 2 & 2 & 2    \\
0 & 1 & 2 & 1 & 2 & 0 & 2 & 0 & 1 \\
0 & 2 & 1 & 1 & 0 & 2 & 2 & 1 & 0 \\
0 & 0 & 0 & 2 & 2 & 2 & 1 & 1 & 1    \\
0 & 1 & 2 & 2 & 0 & 1 & 1 & 2 & 0 \\
0 & 2 & 1 & 2 & 1 & 0 & 1 & 0 & 2
\end{array} \right)
$$
is a generalized Hadamard matrix (of Sylvester type), $\GH(3,3)$, with entries in $\mathbf{F}_3$. 
The generalized Hadamard  code over $G$
$$C=C_1\cup C_2\cup\ldots\cup C_9$$ 
can be endowed with a {\bf full propelinear structure} with the following group $\Pi$ of permutations
$$\pi_x=\left\{
\begin{array}{cc}
I &\quad x\in C_1\\
(1,2,3)(4,5,6)(7,8,9) & \quad x\in C_2\\
(1,3,2)(4,6,5)(7,9,8) &\quad x\in C_3\\
(1,4,7)(2,5,8)(3,6,9) &\quad x\in C_4\\
(1,5,9)(2,6,7)(3,4,8) &\quad x\in C_5\\
(1,6,8)(2,4,9)(3,5,7) &\quad x\in C_6\\
(1,7,4)(2,8,5)(3,9,6) &\quad x\in C_7\\
(1,8,6)(2,9,4)(3,7,5) &\quad x\in C_8\\
(1,9,5)(2,7,6)(3,8,4) &\quad x\in C_9\\
\end{array}
\right.$$

We have $C\cong {\bf Z}_3^3$ and $\Pi\cong {\bf Z}_3^2$. The rank and the dimension of the kernel of this code are 3.
\end{example}

\begin{example}
Let  $G=U= {\bf Z}_2^3 $ be  with indexing $\{0,1, x, x^2, x^3,x^4,x^5,x^6\}$ where

\begin{center}
\begin{tabular}{c|cccccccc}
$+$ & 0 & 1 & $x$ & $x^2$ & $x^3$ & $x^4$ & $x^5$ & $x^6$\\\hline
$0$ & 0 & 1 & $x$ & $x^2$ & $x^3$ & $x^4$ & $x^5$ & $x^6$\\
$1$ &  & 0 & $x^3$ & $x^6$ & $x$ & $x^5$ & $x^4$ & $x^2$\\
$x$ &  &  & 0 & $x^4$ & $1$ & $x^2$ & $x^6$ & $x^5$\\
$x^2$ &  &  &  & 0 & $x^5$ & $x$ & $x^3$ & $1$\\
$x^3$ &  &  &  &  & 0 & $x^6$ & $x^2$ & $x^4$\\
$x^4$ &  &  &  &  &  & 0 & $1$ & $x^3$\\
$x^5$ &  &  &  &  &  &  & $0$ & $x$\\
$x^5$ &  &  &  &  &  &  &  & $0$
\end{tabular}
\end{center}
\noindent 
then the $G$-cocyclic matrix over $U$
$$H=\left(
\begin{array}{cccccccc}
0 & 0 & 0 & 0 & 0 & 0 & 0 & 0\\
0 & 1 & x & x^2 & x^3 & x^4 & x^5 & x^6\\
0 & x & x^2 & x^3 & x^4 & x^5 & x^6 & 1\\
0 & x^2 & x^3 & x^4 & x^5 & x^6 & 1 & x\\
0 & x^3 & x^4 & x^5 & x^6 & 1 & x & x^2\\
0 & x^4 & x^5 & x^6 & 1 & x & x^2 & x^3\\
0 & x^5 & x^6 & 1 & x & x^2 & x^3 & x^4\\
0 & x^6 & 1 & x & x^2 & x^3 & x^4 & x^5\\
\end{array}
\right)$$
is a generalized Hadamard matrix, $\GH(8,1)$, with entries in $\mathbb{F}_8$. The generalized Hadamard  code over $G$
$$C=C_1\cup C_2\cup \ldots \cup C_8$$
can be endowed with a {\bf full propelinear structure} with the following group $\Pi$ of permutations
$$\pi_x=\left\{
\begin{array}{cc}
I & \quad x\in C_1\\
(1,2)(3,5) (4,8) (6,7) &\quad x\in C_2\\
(1,3)(2,5) (4,6) (7,8) &\quad  x\in C_3\\
(1,4)(2,8) (3,6)(5,7)& \quad x\in C_4\\
(1,5)(2,3) (4,7)(6,8)&\quad x\in C_5\\
(1,6)(2,7) (3,4)(5,8)&\quad x\in C_6\\
(1,7)(2,6) (3,8)(4,5)&\quad x\in C_7\\
(1,8)(2,4) (3,7)(5,6)&\quad x\in C_8
\end{array}
\right.$$
We have  $(C,\star)\cong {\bf Z}_4^3$ and $\Pi\cong {\bf Z}_2^3$. The rank and the dimension of the kernel of this code are 2.



\end{example}

\begin{example}
Let  $G=U= {\bf Z}_3^4 $ be  with indexing
$\{0000,0001,0002, 0010, \ldots,$ $ 2222\}$,  the irreducible polynomial which defines multiplication in the field is $2 + x + x^4$ and let $\phi_{(4,3)}$ as in Example \ref{planar_exa}. Then the $G$-cocyclic matrix over $U$
$$[H]_{g,h}=\partial\phi_{(4,3)}(g,h)
$$
is a generalized Hadamard matrix, $\GH(81,1)$, with entries in $\mathbb{F}_{81}$. 
$$C=C_1\cup C_2\cup \ldots \cup C_{81}$$
can be endowed with a {\bf full propelinear structure}. The group  $\Pi$ of permutations and the matrix $[H]_{g,h}$ can be downloaded from the following website (\href{https://ddd.uab.cat/record/204295?ln=en}{ddd.uab.cat/record/204295}). 


We have that $(C,\star)\cong {\bf Z}_3^8$ and $\Pi\cong {\bf Z}_3^4$.  The rank of this code is 11 and the dimension of the kernel is 1. So, $C$ is not linear as we knew.
\end{example}

{In Table \ref{tabla}, we consider the codes associated to $\partial\phi_{(a,b)}$ of Example \ref{planar_exa}.  Let us recall that $\phi_{(a,b)}(g)=g^{(3^b+1)/2}$, with $g\in \mathbb{F}_{3^a}$. Moreover, if $(a,b)=1$, $b$ odd and $3\leq b\leq a-1$ then $\partial\phi_{(a,b)}$ are  orthogonal cocycles and the associated $\GHFP$-codes $C_{a,b}$ are not linear but are they inequivalent? that is, fixed $a$ and assuming that $b_1$ and $b_2$ with $b_1\neq b_2$ are admissible values,  are $C_{a,b_1}$ and $C_{a,b_2}$ inequivalent? If the conjecture below were true, we would have an affirmative answer. For instance, for $a=7$ we have two (cocyclic)  $\GH(3^7,1)$ matrices (one for $b=3$ and another for $b=5$) where their codes ($C_{7,3}$ and $C_{7,5}$) are inequivalent since they have different rank. Consequently, the $\GH$ matrices are nonequivalent as well.} 


\begin{table}[ht]
\caption{The pairs $(r,k)$ of the entries of this table denote the rank and the dimension of the kernel of the  $\GHFP$-codes $C_{a,b}$ associated to $\partial\phi_{(a,b)}$ of Example \ref{planar_exa}. }
\label{tabla}       
\begin{tabular}{cccccccc}
\hline\noalign{\smallskip}
$ b\backslash a$ & 4 & 5 & 6 & 7 & 8 & 9 & 10  \\
\noalign{\smallskip}\hline\noalign{\smallskip}
$3$ & (11,1) & (11,1) &  & (11,1) & (11,1) &  & (11,1)\\
$ 5$ &  &  & (47,1) & (47,1) & (47,1) & (47,1) & \\
 $7$ &  & & &  & (191,1) & (191,1) & (191,1) \\
 $9$ & &  &  &  &  &  & (767,1)\\
\noalign{\smallskip}\hline
\end{tabular}
\bigskip

Let us notice that  in Table \ref{tabla}, we have computed the rank and dimension of the kernel for all admissible value of $b$ for each $a$ in the range $3\leq b\leq a-1$ and $4\leq a\leq 10$. All these computations have been carried out with \textsc{magma} \cite{magma}. We prove in Corollary \ref{ckerej1} that always $k=1$ and for the rank  we conjecture that $r$ depends only on $b$ by $r(b)=3\cdot 2^{b-1}-1$ with $b$ odd.
\end{table}


\section{Kronecker sum construction}
In this section we extend the classical construction of Hadamard codes, based on Kronecker products, to the case of $\GHFP$-codes. As application, we construct an infinite family of nonlinear $\GHFP$-codes for each  $\GH(3^a,1$) matrix as in Example \ref{planar_exa}. Some properties of their rank and the dimension of their kernel are studied and they have been used to prove their nonlinearity.

The \emph{Kronecker sum construction} \cite{Shr64} is a standard method to construct $\GH$ matrices from other $\GH$ matrices. That is, if $H=(h_{i,j})$ is any $\GH(w,v/w$) matrix over $U$ and $B_1, \;B_2,\ldots,\; B_{v}$ are any $\GH(w,v'/w)$ matrices over $U$ then the matrix
$$
  H\oplus [B_1,B_2,\ldots,B_v]=\left(
\begin{array}{ccc}
    h_{11}+B_1 &\ldots & h_{1v}+B_1  \\
    \vdots & \vdots &  \vdots\\
     h_{v1}+B_n &\ldots & h_{vv}+B_v
\end{array}
\right)   
$$

is a $\GH(w,vv'/w)$ matrix. If $B_1=B_2=\ldots=B_v=B,$ then we denote $H\oplus [B_1,B_2,\ldots,B_v]$ by $H\oplus B.$

If $\psi\in Z^2(G,U)$ and $\psi'\in Z^2(G',U)$, then their \emph{tensor product} $\psi\otimes \psi'\in Z^2(G\times G',U)$, where
$$
(\psi\otimes \psi')((g,g'),(h,h'))=\psi(g,h)\psi(g',h'),
$$
and $M_{\psi\otimes \psi'}=M_\psi\oplus M_{\psi'}.$

Let $S_q$ be the normalized $\GH(q,1$) matrix given by the multiplicative table of $\mathbb{F}_q$. We can recursively define $S^t$ as a $\GH(q,q^{t-1}$) matrix, constructed as $S^t=S_q\oplus S^{t-1}$ for $t>1$, (this is an alternative definition for the generalized Sylvester Hadamard matrices). 
It is well-known that $S_q$ is cocyclic (see \cite[p. 122]{Hor07}) and $\rank(C_{S_q})=\ker(C_{S_q})=2$. 
\begin{lemma}\cite[Lemma 3]{DRV16}
Let $H_1$ and $H_2$ be two $\GH$ matrices over $\mathbb{F}_q$ and $H=H_1\oplus H_2$. Then $\rank(C_H)=\rank(C_{H_1})+\rank(C_{H_2})-1$ and 
$\ker(C_H)= \ker(C_{H_1})+ \ker(C_{H_2})-1$.
\end{lemma}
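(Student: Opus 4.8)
The plan is to prove the two identities separately but by the same bookkeeping device: translate each quantity (rank, dimension of the kernel) into a statement about $\mathbb{F}_q$-linear subspaces of $\mathbb{F}_q^{v_1 v_2}$ built from the block structure of $H = H_1 \oplus H_2$. Write $v_1, v_2$ for the orders of $H_1, H_2$, index the coordinates of $H$ by pairs $(i,k)$ with $1 \le i \le v_1$, $1 \le k \le v_2$, so that the $i$-th block of a row of $H$ is (a translate of) a row of $H_2$ shifted by the corresponding entry $h_{ij}$ of $H_1$. First I would record the elementary fact that a codeword of $C_H$ is obtained by choosing a row $r = (r_1,\dots,r_{v_1})$ of $H_1$, a row $s$ of $H_2$, and a scalar $\alpha \in \mathbb{F}_q$, and forming the vector whose $(i,k)$ entry is $r_i + s_k + \alpha$; equivalently $C_H = \{\,(r \otimes \mathbf{1}) + (\mathbf{1} \otimes s) + \alpha \mathbf{1} : r \in F_{H_1},\, s \in F_{H_2},\, \alpha \in \mathbb{F}_q\,\}$, using that $F_{H_i}$ already absorbs the all-$\alpha$ translates that make up $C_{H_i}$. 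This realizes $C_H$ as (a coset-union of) the ``tensor-type'' combination of $C_{H_1}$ and $C_{H_2}$.

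For the rank, the plan is to show $\langle C_H \rangle = (\langle C_{H_1}\rangle \otimes \mathbf{1}) + (\mathbf{1} \otimes \langle C_{H_2}\rangle)$ as subspaces of $\mathbb{F}_q^{v_1 v_2}$. The inclusion $\subseteq$ is immediate from the description above. For $\supseteq$, observe that differences of codewords of $C_H$ sharing the same $H_2$-row and scalar give all of $\langle C_{H_1}\rangle \otimes \mathbf{1}$ (since $\mathbf{1}_{v_1} \in C_{H_1}$ because $H_1$ is normalized), and symmetrically for the other summand. Then $\dim(\langle C_{H_1}\rangle \otimes \mathbf{1}) = \rank(C_{H_1})$, $\dim(\mathbf{1} \otimes \langle C_{H_2}\rangle) = \rank(C_{H_2})$, and their intersection is exactly $\langle \mathbf{1}_{v_1 v_2}\rangle$, which is one-dimensional because both $\langle C_{H_1}\rangle$ and $\langle C_{H_2}\rangle$ contain the all-ones vector (again normalization). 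The inclusion–exclusion formula for dimensions then yields $\rank(C_H) = \rank(C_{H_1}) + \rank(C_{H_2}) - 1$. The one point deserving care is the claim that the intersection is no larger than $\langle \mathbf{1}\rangle$: a vector lying in both $W_1 \otimes \mathbf{1}$ and $\mathbf{1}\otimes W_2$ must be simultaneously constant on each $H_1$-block and have blocks that are all equal, forcing it to be a global constant vector.

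For the kernel, I would argue that $x \in \mathcal{K}(C_H)$ forces $x$ to be ``block-constant plus block-pattern'' in the same way: restricting the identity $C_H + \alpha x = C_H$ to the first block (which is a copy of $C_{H_2}$ up to a constant, by item 2 of the facts preceding Proposition~\ref{GHFP-SDR} applied to $H_2$) shows the within-block part of $x$ lies in $\mathcal{K}(C_{H_2})$, and collapsing each block to its constant shows the across-block part lies in $\mathcal{K}(C_{H_1})$; conversely any such $x$ stabilizes $C_H$. This identifies $\mathcal{K}(C_H)$ with $(\mathcal{K}(C_{H_1}) \otimes \mathbf{1}) + (\mathbf{1}\otimes \mathcal{K}(C_{H_2}))$, and the same intersection computation (both kernels contain $\mathbf{1}$, since $\alpha\mathbf{1} \in C_{H_i}$ and $C_{H_i} + \beta(\alpha\mathbf{1}) = C_{H_i}$) gives $\ker(C_H) = \ker(C_{H_1}) + \ker(C_{H_2}) - 1$. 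The main obstacle I anticipate is making the ``restrict to one block / collapse the blocks'' maps precise enough that the decomposition $\mathcal{K}(C_H) = (\mathcal{K}(C_{H_1})\otimes\mathbf{1}) + (\mathbf{1}\otimes\mathcal{K}(C_{H_2}))$ is an honest equality rather than just one inclusion — in particular, verifying that an arbitrary kernel element has no ``genuinely mixed'' component requires using that both projections are compatible with the $C_H + \alpha x = C_H$ condition for every $\alpha$, not just for $\alpha = 1$.
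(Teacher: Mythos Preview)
The paper does not prove this lemma; it is quoted verbatim from \cite[Lemma~3]{DRV16} and stated without proof, so there is no in-paper argument to compare against.

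For what it is worth, your plan is sound and is essentially the standard argument. The description $C_H=\{(r\otimes\mathbf{1})+(\mathbf{1}\otimes s)+\alpha\mathbf{1}:r\in F_{H_1},\,s\in F_{H_2},\,\alpha\in\mathbb{F}_q\}$ is correct, and your rank computation via $\langle C_H\rangle=(\langle C_{H_1}\rangle\otimes\mathbf{1})+(\mathbf{1}\otimes\langle C_{H_2}\rangle)$ with one-dimensional intersection $\langle\mathbf{1}\rangle$ goes through cleanly. For the kernel, your anticipated obstacle dissolves once you note that any $x\in\mathcal{K}(C_H)$ already lies in $C_H$ (since $\mathbf{0}\in C_H$), hence $x$ has the separated form $x=(a\otimes\mathbf{1})+(\mathbf{1}\otimes b)$ to begin with; the uniqueness of this decomposition up to a constant (if $u\otimes\mathbf{1}=\mathbf{1}\otimes v$ then both are scalar multiples of $\mathbf{1}$) then forces $r+\alpha a\in C_{H_1}$ and $s+\alpha b\in C_{H_2}$ for every $r,s,\alpha$, giving $a\in\mathcal{K}(C_{H_1})$ and $b\in\mathcal{K}(C_{H_2})$ as you want. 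So no ``genuinely mixed'' kernel elements can occur.
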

Immediate consequences of the result above are that
$\rank(C_{ S^l})=\ker(C_{ S^l})=l+1$. On the other hand, 
if $H_1$ is linear and $H_2$ is not (or vice versa) then $H=H_1\oplus H_2$ is not linear.
\begin{lemma}\cite[Corollary 28]{DRV16}\label{c28drv16}
Let $H$ be a $\GH(q,q^{h-1}$) matrix over $\mathbb{F}_q$, with $q>3$ and $h\geq 1$, or $q=3$ and $h\geq 2$. Then $\rank(C_{H})\in \{h+1,\ldots, \lfloor q^h/2\rfloor\}$.
\end{lemma}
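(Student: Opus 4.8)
This is \cite[Corollary 28]{DRV16}; here is the route I would take. The lower bound is pure counting: $C_H\subseteq\langle C_H\rangle$ and $|C_H|=qv=q^{h+1}$, while a linear code over $\mathbb{F}_q$ of dimension $d$ has $q^{d}$ elements, so $\rank(C_H)\geq h+1$.

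For the upper bound the plan is to show that $\langle C_H\rangle$ is totally isotropic for the standard bilinear form on $\mathbb{F}_q^{v}$, where $v=q^{h}$; then $\rank(C_H)=\dim\langle C_H\rangle\leq v/2$, and since $\rank(C_H)$ is an integer this yields $\rank(C_H)\leq\lfloor q^{h}/2\rfloor$. Since $C_H=\bigcup_{\alpha\in\mathbb{F}_q}(F_H+\alpha\mathbf 1)$, we have $\langle C_H\rangle=\langle F_H\rangle+\langle\mathbf 1\rangle$ (in fact a direct sum, because the rows of the normalized $H$ have first entry $0$ while a nonzero multiple of $\mathbf 1$ does not; this also shows $\rank(C_H)=\rank_{\mathbb{F}_q}(H)+1$). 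So isotropy of $\langle C_H\rangle$ comes down to three identities: $\mathbf 1\cdot\mathbf 1=0$, which holds since $v=q^{h}\equiv 0\pmod p$; $\mathbf 1\cdot r=0$ for every row $r$ of $H$, which holds because a nonzero row of a $\GH(q,q^{h-1})$ takes each value of $\mathbb{F}_q$ exactly $q^{h-1}$ times and $\sum_{\alpha\in\mathbb{F}_q}\alpha=0$ once $q>2$; and $r\cdot s=0$ for all rows $r,s$ of $H$, i.e.\ $HH^{\top}=0$.

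The real work is $HH^{\top}=0$. On the diagonal, $r\cdot r=q^{h-1}\sum_{\alpha\in\mathbb{F}_q}\alpha^{2}$, and this sum vanishes exactly when $q>3$ (multiply it by $g^{2}$ for a generator $g$ of $\mathbb{F}_q^{*}$ to get $(g^{2}-1)\sum_{\alpha}\alpha^{2}=0$); when $q=3$ and $h\geq 2$ one instead gets $r\cdot r=2\cdot 3^{h-1}\equiv 0\pmod 3$. These are precisely the hypotheses of the lemma, and $q=2$ is genuinely excluded. For an off-diagonal entry $r\cdot s$ with $r\neq s$ in odd characteristic, I would polarize: $r\cdot s=\tfrac12\big(r\cdot r+s\cdot s-(r-s)\cdot(r-s)\big)$, and since $r-s$ is again balanced all three summands equal $q^{h-1}\sum_{\alpha}\alpha^{2}=0$. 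The step I expect to be the main obstacle is these off-diagonal entries in characteristic $2$: polarization is unavailable, and balancedness of a single pair of rows and of their difference does not on its own force $\sum_{k}h_{ik}h_{jk}=0$, so one has to bring in more of the global structure of the generalized Hadamard matrix — this is the delicate part supplied in \cite{DRV16}.
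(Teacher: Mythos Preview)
The paper gives no proof of this lemma; it is quoted from \cite[Corollary~28]{DRV16} and used as a black box, so there is no in-paper argument to compare against.

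Your sketch is a sound reconstruction of the expected argument. The lower bound $\rank(C_H)\geq h+1$ is exactly the cardinality count you give, and the decomposition $\langle C_H\rangle=\langle F_H\rangle\oplus\langle\mathbf 1\rangle$ (hence $\rank(C_H)=\rank_{\mathbb{F}_q}(H)+1$) is correct. The upper bound via self-orthogonality of $\langle C_H\rangle$ for the standard bilinear form is the natural mechanism behind the value $\lfloor q^h/2\rfloor$, and your odd-characteristic verification is clean: balancedness of each nonzero row and of each row-difference, together with polarization and $\sum_{\alpha\in\mathbb{F}_q}\alpha^2=0$ for $q>3$ (respectively $3^{h-1}\cdot 2\equiv 0\pmod 3$ for $q=3$, $h\geq 2$), gives $HH^\top=0$ and hence the claim. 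You are also right to isolate the characteristic-$2$ off-diagonal entries as the nontrivial step: polarization collapses there, and balancedness of $r$, $s$, $r+s$ alone does not pin down $\sum_k r_ks_k$, so one genuinely needs the additional structure exploited in \cite{DRV16}. Since the present paper does not supply that step either, your deferral is appropriate and there is nothing further to match.
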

\begin{lemma}\cite[Proposition 9]{DRV16}\label{p9drv16} Let $H$ be a $\GH(q,\lambda$) over $\mathbb{F}_q$, where $q=p^e$ and $p$ prime. Let $v=q\lambda=p^t s$ such that $\gcd(p,s)=1$. Then $1\leq \ker(C_H)\leq \ker_p(C_H)\leq 1+t/e$.
\end{lemma}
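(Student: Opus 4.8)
The plan is to dispatch the three inequalities one at a time, each by a short counting argument, phrasing everything in terms of cardinalities and reading the ``dimensions'' appearing in the statement as base-$q$ logarithms of those cardinalities. For the first two inequalities: since $C_H=\bigcup_{\alpha\in\mathbb F_q}(F_H+\alpha\mathbf 1)$, translation by $\beta\mathbf 1$ (with $\beta\in\mathbb F_q$) only permutes the cosets $F_H+\alpha\mathbf 1$ among themselves, so $C_H+\beta\mathbf 1=C_H$ for every $\beta$; since every $\mathbb F_q$-scalar multiple of $\beta\mathbf 1$ is again of the form $\delta\mathbf 1$ with $\delta\in\mathbb F_q$, this shows $\beta\mathbf 1\in\mathcal K(C_H)$, hence $C_1=\{\beta\mathbf 1:\beta\in\mathbb F_q\}\subseteq\mathcal K(C_H)$, and as $C_1$ is a one-dimensional $\mathbb F_q$-subspace we get $\ker(C_H)\ge 1$. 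Taking $\alpha=1$ in the defining condition for $\mathcal K(C_H)$ gives $\mathcal K(C_H)\subseteq\mathcal K_p(C_H)$, so $|\mathcal K(C_H)|\le|\mathcal K_p(C_H)|$ and therefore $\ker(C_H)\le\ker_p(C_H)$.

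The substantive step is the upper bound $\ker_p(C_H)\le 1+t/e$, and the idea is to let $\mathcal K_p(C_H)$ act on $C_H$ by translation $c\mapsto c+x$. This action is well defined because $C_H+x=C_H$ for $x\in\mathcal K_p(C_H)$, and it is free since $c+x=c$ forces $x=\mathbf 0$ in $\mathbb F_q^v$; consequently all orbits have size $|\mathcal K_p(C_H)|$, and since they partition $C_H$ we obtain $|\mathcal K_p(C_H)|\mid|C_H|$. But $|C_H|=qv=p^{e+t}s$ with $\gcd(p,s)=1$, whereas $\mathcal K_p(C_H)$ is a finite $\mathbb F_p$-vector space and so has order a power of $p$; hence $|\mathcal K_p(C_H)|\le p^{e+t}=q^{\,1+t/e}$, which is exactly $\ker_p(C_H)\le 1+t/e$.

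I expect the only real obstacle to be a bookkeeping one rather than a mathematical one: to make the three quantities $\ker(C_H)$, $\ker_p(C_H)$ and $1+t/e$ comparable, $\ker_p(C_H)$ must be read as $\log_q|\mathcal K_p(C_H)|=\tfrac1e\dim_{\mathbb F_p}\mathcal K_p(C_H)$ rather than as the bare $\mathbb F_p$-dimension of $\mathcal K_p(C_H)$; with the latter normalization the divisibility argument of the previous paragraph would only yield the weaker bound $\dim_{\mathbb F_p}\mathcal K_p(C_H)\le e+t$. Once this scaling is fixed, the argument is entirely routine and uses nothing about $H$ beyond the fact that $C_H$ has $qv$ codewords and is a union of $\mathbb F_q$-translates of $F_H$ by $C_1$.
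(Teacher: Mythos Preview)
The paper does not supply its own proof of this lemma: it is quoted verbatim from \cite[Proposition~9]{DRV16} and used as a black box. So there is no ``paper's proof'' to compare against, only the original source.

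Your argument is correct and is essentially the standard one. The lower bound comes from $C_1=\{\beta\mathbf 1:\beta\in\mathbb F_q\}\subseteq\mathcal K(C_H)$; the middle inequality from the trivial containment $\mathcal K(C_H)\subseteq\mathcal K_p(C_H)$; and the upper bound from the free translation action of $\mathcal K_p(C_H)$ on $C_H$, which forces $|\mathcal K_p(C_H)|$ to divide $|C_H|=qv=p^{e+t}s$, together with the fact that $|\mathcal K_p(C_H)|$ is a $p$-power. Your caveat about the normalization of $\ker_p$ is exactly right: the paper defines $\ker(C)$ as an $\mathbb F_q$-dimension but never explicitly says what $\ker_p(C)$ means, and the chain of inequalities only makes sense if $\ker_p(C_H)$ is read as $\log_q|\mathcal K_p(C_H)|$, which is indeed the convention in \cite{DRV16}. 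One small point you use implicitly and might state: $|C_H|=qv$ relies on the $q$ translates $F_H+\alpha\mathbf 1$ being pairwise disjoint, which holds because $H$ is normalized and hence every row of $F_H$ has first coordinate $0$.
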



\begin{lemma}\label{Ksubgroup}
Let $C$ be a generalized full propelinear code. Then $\mathcal{K}(C)$ is a subgroup of $C$.
\end{lemma}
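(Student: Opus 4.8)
The plan is to play the two algebraic structures carried by $C$ off against each other: the ambient $\mathbb{F}_q$-vector space structure, under which (as recalled in Section~2) $\mathcal{K}(C)$ is a \emph{linear subspace}, and the propelinear group structure $(C,\star)$. First I would record two easy facts: $\mathcal{K}(C)\subseteq C$ — if $x\in\mathcal{K}(C)$ then $C+x=C$, and since $\mathbf 0\in C$ this forces $x\in C$ — and $\mathbf 0\in\mathcal{K}(C)$. Thus $\mathcal{K}(C)$ is a nonempty subset of $C$, and since it is finite it will be enough to show that it is closed under $\star$.

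The key observation is that if $x\in\mathcal{K}(C)$ then the coordinate permutation $\pi_x$ fixes $C$ setwise, i.e. $\pi_x(C)=C$. Indeed, since $C$ is full propelinear we have $x+\pi_x(C)=(\sigma_x,\pi_x)(C)=C$, so $\pi_x(C)=-x+C$; but $-x\in\mathcal{K}(C)$ because $\mathcal{K}(C)$ is a subspace, hence $-x+C=C$ by definition of the kernel. Using this I would next show that $\mathcal{K}(C)$ is stable under $\pi_x$ for every $x\in\mathcal{K}(C)$: given $y\in\mathcal{K}(C)$ and any $\alpha\in\mathbb{F}_q$, applying the coordinate permutation $\pi_x$ to the identity $C+\alpha y=C$ gives $\pi_x(C)+\alpha\pi_x(y)=\pi_x(C)$, which by the previous sentence reads $C+\alpha\pi_x(y)=C$; as $\alpha$ was arbitrary, $\pi_x(y)\in\mathcal{K}(C)$.

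Closure under $\star$ then follows quickly: for $x,y\in\mathcal{K}(C)$ and $\alpha\in\mathbb{F}_q$ we have $\alpha(x\star y)=\alpha\bigl(x+\pi_x(y)\bigr)=\alpha x+\alpha\pi_x(y)$, a sum of two elements of the subspace $\mathcal{K}(C)$ (the first because $\mathcal{K}(C)$ is a subspace, the second by the previous paragraph), hence an element of $\mathcal{K}(C)$; therefore $C+\alpha(x\star y)=C$ for all $\alpha$, that is, $x\star y\in\mathcal{K}(C)$. A nonempty finite subset of a group closed under the group operation is a subgroup, so $\mathcal{K}(C)$ is a subgroup of $(C,\star)$. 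If one prefers to produce inverses explicitly, the same ``$\pi_x$ preserves $C$'' device applied with $\pi_x^{-1}$ yields $\alpha x^{-1}=-\pi_x^{-1}(\alpha x)\in\mathcal{K}(C)$, where $x^{-1}$ is the $\star$-inverse.

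I expect the one genuine point — the heart of the argument — to be the identity $\pi_x(C)=C$ for $x\in\mathcal{K}(C)$; once that is in hand the rest is bookkeeping that interleaves the additive structure with the action of the permutations $\pi_x$. The only thing to watch is that membership in $\mathcal{K}(C)$ is a condition on \emph{all} scalars $\alpha\in\mathbb{F}_q$, not just $\alpha=1$, so I keep $\alpha$ generic in each closure step rather than specializing.
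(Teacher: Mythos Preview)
Your proof is correct and follows essentially the same line as the paper's: both use that $\mathcal{K}(C)$ is a linear subspace (since $\mathbf 0\in C$), verify closure under $\star$ via the identity $x\star y=x+\pi_x(y)$ together with the linearity of coordinate permutations, and conclude by the ``finite nonempty $+$ closed $\Rightarrow$ subgroup'' argument. The only organisational difference is that you isolate the intermediate fact $\pi_x(C)=C$ for $x\in\mathcal{K}(C)$, whereas the paper works directly with $x\star C=C$ (valid for every $x\in C$) and never needs that statement explicitly; the underlying computation is the same.
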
 
\begin{proof}
As ${\bf 0} \in C$, we have that $\mathcal{K}(C)$ is linear. Let $x,y$ be in $\mathcal{K}(C)$, so $\alpha x + C = C$ and $\alpha y + C = C$ for all $\alpha \in \mathbb{F}_q$.
Therefore, $\alpha (x \star y) + C = \alpha (x+\pi_x(y))+ x\star C= \alpha x + \alpha \pi_x(y)+x+\pi_x(C)= \alpha x + x + \pi_x(\alpha y+ C)= \alpha x + x + \pi_x(C)= \alpha x + x \star C= \alpha x + C = C$, and so $x\star y \in \mathcal{K}(C)$. Thus, the operation $\star$ is closed on $\mathcal{K}(C)$. Since $\mathcal{K}(C)$ is finite and ${\bf 0} \in C$, we have that $\mathcal{K}(C)$ is a subgroup.
\end{proof}

\begin{proposition}Let $H$ be a $\GH(3^a,1$) over $\mathbb{F}_{3^a}$ where $C_H$ is a $\GHFP$-code. Then $\ker(C_H)\in \{1,2\}$. If $\ker(C_H)=2$, then $C_H$ is linear. Furthermore, if $a>1$, then $\rank(C_H)\geq 2$. 
\end{proposition}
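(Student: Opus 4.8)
The plan is to derive all three assertions from the general bounds on the kernel and rank of generalized Hadamard codes recorded in Lemmas~\ref{c28drv16} and~\ref{p9drv16}, combined with the elementary observation that a $\GH(3^a,1)$ matrix has order $v=3^a$, so that, writing $q=3^a$, the code $C_H$ has exactly $qv=q^2$ codewords.

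First I would settle $\ker(C_H)\in\{1,2\}$ by applying Lemma~\ref{p9drv16} with $q=p^e=3^a$, hence $p=3$ and $e=a$, and with $\lambda=1$. Then $v=q\lambda=3^a$, and writing $v=p^t s$ with $\gcd(p,s)=1$ forces $t=a$ and $s=1$. The lemma then gives $1\le\ker(C_H)\le\ker_p(C_H)\le 1+t/e=1+a/a=2$, which is exactly the first claim.

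Next I would prove the implication $\ker(C_H)=2\Rightarrow C_H$ linear. Since ${\bf 0}\in C_H$, the definition of the kernel shows $c+\mathcal{K}(C_H)\subseteq C_H$ for every $c\in C_H$, so $C_H$ is a disjoint union of cosets of the $\mathbb{F}_q$-subspace $\mathcal{K}(C_H)$; hence $|\mathcal{K}(C_H)|=q^{\ker(C_H)}$ divides $|C_H|=q^2$. (Equivalently, by Lemma~\ref{Ksubgroup}, $\mathcal{K}(C_H)$ is a subgroup of $(C_H,\star)$, and one invokes Lagrange's theorem.) If $\ker(C_H)=2$ then $|\mathcal{K}(C_H)|=q^2=|C_H|$, forcing $C_H=\mathcal{K}(C_H)$, an $\mathbb{F}_q$-linear subspace, so $C_H$ is linear.

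Finally, for $a>1\Rightarrow\rank(C_H)\ge 2$, I would write $H$ as a $\GH(q,q^{h-1})$; since $q^{h-1}=1$ this means $h=1$, and because $q=3^a>3$ Lemma~\ref{c28drv16} applies and yields $\rank(C_H)\in\{h+1,\dots,\lfloor q^h/2\rfloor\}=\{2,\dots,\lfloor q/2\rfloor\}$, in particular $\rank(C_H)\ge 2$. This is also the point at which the hypothesis $a>1$ is genuinely used, since for $q=3$ and $h=1$ Lemma~\ref{c28drv16} does not apply; a robust substitute valid for all $a$ is that $\rank(C_H)=1$ would force the linear span of $C_H$ to have size $q<q^2=|C_H|$, a contradiction. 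There is no real obstacle in this argument: the only steps that repay a moment's care are the parameter bookkeeping that makes the bound of Lemma~\ref{p9drv16} equal $2$, and the coset/Lagrange step that upgrades the numerical equality $|\mathcal{K}(C_H)|=|C_H|$ into genuine linearity.
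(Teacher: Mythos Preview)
Your proposal is correct and follows essentially the same route as the paper's proof: Lemma~\ref{p9drv16} for the kernel bound, a cardinality comparison $|\mathcal{K}(C_H)|=q^2=|C_H|$ (the paper phrases it via explicit generators $\mathbf{1},x$ rather than cosets, but the content is identical) for the linearity implication, and Lemma~\ref{c28drv16} with $h=1$ and $q=3^a>3$ for the rank bound. Your alternative elementary argument for $\rank(C_H)\ge 2$ is a pleasant bonus not present in the paper.
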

\begin{proof}
From Lemma \ref{p9drv16}, we have that $\ker(C_H) \in \{1, 2\}$.
We suppose that $\mathcal{K}(C_H)=\langle \mathbf{1}, x \rangle$, for some $x \in C_H$ with $x\neq \alpha \mathbf{1}$ for any $\alpha \in \mathbb{F}_{3^a}$. As the kernel is a linear subspace of $C_H$, we have that $\mathcal{K}(C_H)=\{ \alpha \mathbf{1} + \beta x : \alpha,\beta \in \mathbb{F}_{3^a} \}$. Thus, $|\mathcal{K}(C_H)|=3^{2a}=|C_H|$. Therefore $C_H=\mathcal{K}(C_H)$ and so $C_H$ is linear.

From Lemma \ref{c28drv16}, we have that $\rank(C_H)\geq 2$ if $a>1$.
\end{proof}
\begin{corollary}\label{ckerej1}
Let $H=M_{\partial\phi_{(a,b)}}$ be as in Example \ref{planar_exa} 
then
$\ker(C_H)=1$.
\end{corollary}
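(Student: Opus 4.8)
The plan is to read the statement off from the preceding Proposition together with the nonlinearity recorded in Remark \ref{propiedadesplanar}; there is essentially nothing to compute.

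First I would check that the hypotheses of the preceding Proposition apply to $H = M_{\partial\phi_{(a,b)}}$. By Example \ref{planar_exa}, for the admissible parameters $(a,b)$ (that is, $(a,b)=1$, $b$ odd, $3\leq b\leq a-1$) the map $\partial\phi_{(a,b)}$ is an orthogonal coboundary and $H$ is a $\GH(3^a,1)$ over $\mathbb{F}_{3^a}$; in particular $H$ is cocyclic over the orthogonal cocycle $\psi = \partial\phi_{(a,b)} \in Z^2(G,K)$ with $G = K = (\mathbb{F}_{3^a},+)$. Since $\psi$ is orthogonal, Proposition \ref{pfromctocg} endows $C_H$ (which coincides setwise with $\Phi(E_\psi)$) with a full propelinear structure $\star$, i.e.\ $C_H$ is a $\GHFP$-code. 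Hence the preceding Proposition applies and yields $\ker(C_H) \in \{1,2\}$, together with the implication that $\ker(C_H) = 2$ would force $C_H$ to be a linear $3^a$-ary code.

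It then remains only to exclude the value $\ker(C_H) = 2$, and this is exactly what Remark \ref{propiedadesplanar}(2) provides: the Hadamard code arising from $\partial\phi_{(a,b)}$ --- which is the code $C_H$ considered here --- is not linear over $\mathbb{F}_{3^a}$. Therefore $\ker(C_H) \neq 2$, and combining with $\ker(C_H) \in \{1,2\}$ we conclude $\ker(C_H) = 1$, as claimed.

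The argument is a bookkeeping exercise, and the only point requiring care is an identification rather than a piece of mathematics: one must be certain that ``the resulting ternary Hadamard code'' of Remark \ref{propiedadesplanar}(2), whose nonlinearity is inherited from \cite{CM97,Hor07}, is literally the code $C_H=\cup_{\alpha\in\mathbb{F}_{3^a}}(F_H+\alpha\mathbf{1})$ appearing in the preceding Proposition, so that its nonlinearity is precisely the statement $C_H\neq\mathcal{K}(C_H)$ needed to rule out a two-dimensional kernel. Once this is granted the proof is complete; no information about $\rank(C_H)$ is required.
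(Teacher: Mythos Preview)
Your proposal is correct and follows exactly the paper's approach: the paper's proof is the single line ``$C_H$ is a nonlinear $\GHFP$-code by Remark \ref{propiedadesplanar}'', which implicitly uses the preceding Proposition to reduce to the dichotomy $\ker(C_H)\in\{1,2\}$ and then invokes nonlinearity to exclude $2$. Your write-up simply makes explicit the verification that $C_H$ is a $\GHFP$-code (via Proposition \ref{pfromctocg}) and spells out the logical steps, but the mathematical content is identical.
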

\begin{proof}
$C_H$ is a nonlinear $\GHFP$-code by Remark \ref{propiedadesplanar}.
\end{proof}

\begin{corollary}
If $q=3^a$ with $a>1$, $H$ a $\GH$ matrix over $\mathbb{F}_q$ where $C_H$ is a nonlinear $\GHFP$-code and $H'=S_q\oplus H$. Then $\rank(C_{H'})=\rank(C_H)+1>\ker(C_{H'})=2$.
\end{corollary}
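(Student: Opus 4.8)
The plan is to combine the two Kronecker-sum lemmas already stated with the hypothesis that $C_H$ is a \emph{nonlinear} $\GHFP$-code. First I would record that $S_q$ is a $\GH(q,1)$ matrix over $\mathbb{F}_q$ with $\rank(C_{S_q})=\ker(C_{S_q})=2$, and that $H'=S_q\oplus H$ is again a $\GH$ matrix over $\mathbb{F}_q$ by the Kronecker sum construction; moreover $S_q$ is cocyclic and, since $H$ is cocyclic (being the matrix of a $\GHFP$-code, by Corollary \ref{corolariofundamental}), the tensor product of the two cocycles gives a cocycle developing $H'$, so $C_{H'}$ is itself a $\GHFP$-code. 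This puts $H'$ in the scope of all the earlier machinery.

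Next I would apply \cite[Lemma 3]{DRV16} (the rank/kernel additivity lemma in the excerpt) to $H'=S_q\oplus H$, with $H_1=S_q$ and $H_2=H$: this gives $\rank(C_{H'})=\rank(C_{S_q})+\rank(C_H)-1=\rank(C_H)+1$ and $\ker(C_{H'})=\ker(C_{S_q})+\ker(C_H)-1=\ker(C_H)+1$. Now I invoke the previous Corollary (Corollary \ref{ckerej1}), or rather the preceding Proposition, to pin down $\ker(C_H)$: since $C_H$ is a nonlinear $\GHFP$-code over $\mathbb{F}_{3^a}$ with $a>1$, that Proposition forces $\ker(C_H)=1$ (the value $2$ would make $C_H$ linear). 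Substituting, $\ker(C_{H'})=2$.

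Finally I would show the strict inequality $\rank(C_{H'})>\ker(C_{H'})$, equivalently $\rank(C_H)+1>2$, i.e. $\rank(C_H)>1$; but this is exactly the last clause of the preceding Proposition, which gives $\rank(C_H)\ge 2$ whenever $a>1$. (One could also argue directly: $S_q$ is linear and $H$ is not, hence $H'=S_q\oplus H$ is not linear, so $\rank(C_{H'})>\ker(C_{H'})$ since a code is linear iff its rank equals its kernel dimension.) Assembling the three displayed equalities and this inequality yields $\rank(C_{H'})=\rank(C_H)+1>\ker(C_{H'})=2$, as claimed. I do not anticipate a genuine obstacle here; the only point needing a word of care is the verification that $H'$ really is cocyclic/$\GHFP$ so that the $\ker$ and $\rank$ bounds for $\GHFP$-codes apply — but this follows immediately from the tensor-product-of-cocycles remark recorded above, and in fact the additivity lemma and the linearity criterion do not even require the $\GHFP$ hypothesis on $H'$, so the argument is robust.
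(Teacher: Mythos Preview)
Your argument is correct and matches the paper's intended (implicit) approach: the paper states this Corollary without proof, as it follows directly from the additivity lemma \cite[Lemma~3]{DRV16} together with the preceding Proposition. One small caveat worth making explicit: the Proposition you invoke to force $\ker(C_H)=1$ from nonlinearity is stated only for $\GH(3^a,1)$ matrices, so the hypothesis ``$H$ a $\GH$ matrix over $\mathbb{F}_q$'' in the Corollary should be read (as context makes clear) with $H$ a $\GH(3^a,1)$; for a general $\GH(3^a,\lambda)$ nonlinearity alone would not pin down $\ker(C_H)=1$.
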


\begin{proposition}\label{ptcocyclos}\cite[Theorem 6.9]{Hor07}
Let $\psi_i\in Z^2(G_i,U), \, 1\leq i\leq n$ and $\psi=\psi_1\otimes \cdots\otimes \psi_n\in Z^2(G_1\times\cdots\times G_n,U)$. Then $\psi$ is orthogonal if and only if $\psi_i$ is orthogonal, $1\leq i\leq n$.
\end{proposition}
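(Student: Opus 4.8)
The plan is to reduce to two factors by induction and then prove the statement for $\psi = \psi_1 \otimes \psi_2$ by a direct count. For the inductive step I would use associativity of the tensor product to write $\psi_1 \otimes \cdots \otimes \psi_n$ as $(\psi_1 \otimes \cdots \otimes \psi_{n-1}) \otimes \psi_n$ on $(G_1 \times \cdots \times G_{n-1}) \times G_n$; the two-factor case then gives that $\psi$ is orthogonal iff $\psi_1 \otimes \cdots \otimes \psi_{n-1}$ and $\psi_n$ are both orthogonal, and the induction hypothesis disposes of the first condition. Everything therefore reduces to $\psi = \psi_1 \otimes \psi_2$ over $G = G_1 \times G_2$; I will write $v_i = |G_i|$ and $w = |U|$.

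For the forward direction I would assume $\psi$ orthogonal, fix $g_1 \neq 1$ in $G_1$ and $u \in U$, and evaluate $\psi$ at the nonidentity element $(g_1,1)$: since $\psi_2$ is normalized, $\psi((g_1,1),(h_1,h_2)) = \psi_1(g_1,h_1)$, so
\[
\{(h_1,h_2) : \psi((g_1,1),(h_1,h_2)) = u\} = \{h_1 : \psi_1(g_1,h_1) = u\} \times G_2 .
\]
Orthogonality of $\psi$ forces this set to have size $v_1 v_2/w$, hence $|\{h_1 : \psi_1(g_1,h_1) = u\}| = v_1/w$, which is orthogonality of $\psi_1$; the argument for $\psi_2$ is symmetric.

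For the converse I would assume every $\psi_i$ orthogonal, fix $(g_1,g_2) \neq (1,1)$ and $u \in U$, and assume (by symmetry) $g_1 \neq 1$. Splitting the count over the value $a = \psi_1(g_1,h_1)$ and using that $U$ is abelian gives
\[
|\{(h_1,h_2) : \psi_1(g_1,h_1)\psi_2(g_2,h_2) = u\}| = \sum_{a \in U} |\{h_1 : \psi_1(g_1,h_1) = a\}|\,|\{h_2 : \psi_2(g_2,h_2) = a^{-1}u\}| .
\]
Since $g_1 \neq 1$ and $\psi_1$ is orthogonal, every factor $|\{h_1 : \psi_1(g_1,h_1) = a\}|$ equals $v_1/w$; and as $a$ runs over $U$ so does $a^{-1}u$, so the remaining sum collapses to $\sum_{b \in U}|\{h_2 : \psi_2(g_2,h_2) = b\}| = |G_2| = v_2$. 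The count is thus $v_1 v_2/w$ for every $u$, i.e.\ $\psi$ is orthogonal.

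I do not expect a genuine obstacle here — the statement is essentially a convolution identity for the functions $h \mapsto \psi_i(g_i,h)$ — and the only places needing attention are the use of normalization in the forward direction and the bijection $a \mapsto a^{-1}u$ in the converse. As an alternative I could run the whole argument on matrices via Theorem~\ref{ghequiv} together with $M_{\psi_1 \otimes \psi_2} = M_{\psi_1} \oplus M_{\psi_2}$: the converse is then immediate because a Kronecker sum of generalized Hadamard matrices is again generalized Hadamard, and the forward direction comes from inspecting the block row of $M_{\psi_1 \otimes \psi_2}$ indexed by $(g_1,1)$, which consists of $v_2$ copies of the $g_1$-row of $M_{\psi_1}$.
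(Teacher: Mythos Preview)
Your argument is correct. The reduction to two factors by associativity is clean, the forward direction via the element $(g_1,1)$ is exactly right (normalization of $\psi_2$ makes the count factor through $G_2$), and in the converse your convolution count works regardless of whether $g_2=1$, since the inner sum $\sum_{b\in U}|\{h_2:\psi_2(g_2,h_2)=b\}|$ collapses to $|G_2|$ without any hypothesis on $\psi_2$ --- orthogonality of $\psi_2$ is only invoked implicitly through the symmetric case $g_1=1$, $g_2\neq 1$.

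As for comparison: the paper does not supply its own proof of this proposition at all. It is quoted verbatim from \cite[Theorem~6.9]{Hor07} and used as a black box (to deduce that $S^l$ and Kronecker sums of cocyclic $\GH$ matrices remain cocyclic). So there is nothing in the paper to compare your argument against; your direct counting proof is a perfectly good self-contained substitute for the citation. The matrix alternative you sketch at the end, via $M_{\psi_1\otimes\psi_2}=M_{\psi_1}\oplus M_{\psi_2}$ and the Kronecker-sum closure of $\GH$ matrices, is also valid and is in fact closer in spirit to how the paper \emph{uses} the result in Section~5.
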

\begin{remark}
As a direct consequence of Proposition \ref{ptcocyclos}, the Sylvester generalized Hadamard matrix $S^l$ is cocyclic. 
\end{remark}

%
\begin{proposition}
Let $B_1$ be a  $\GH(w,v/w)$ matrix over $U$ and $B_2$ be a $\GH(w,v'/w)$ matrix over $U$. If $C_{B_1}$ and $C_{B_2}$ are $\GHFP$-codes then $C_H$ is a $\GHFP$-code too where $H=B_1\oplus B_2$.
Moreover,
$$\begin{array}{c}
   \pi_{a\oplus b}(x\oplus y)=\pi_a(x)\oplus \pi_b(y),    \\[2mm]
     (a\oplus b)\star (x\oplus y)= (a\star x)\oplus (b\star y).
\end{array}$$
where $a=(a_1,a_2,\ldots,a_v)$, $b=(b_1,b_2,\ldots,b_{v'})$  and $a\oplus b=(a_1+b_1,\ldots,a_1+b_{v'},a_2+b_1,\ldots,a_2+b_{v'},\ldots, a_v+b_1,\ldots, a_v+b_{v'})$ are rows in $B_1$, $B_2$ and $H$, respectively; $x\in C_{B_1}$ and $y\in C_{B_2}$.
\end{proposition}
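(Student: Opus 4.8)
The plan is to build the propelinear structure on $C_H$ directly from the structures on $C_{B_1}$ and $C_{B_2}$ via the tensor/Kronecker identification, then verify the propelinear axioms. First I would recall that the rows of $H=B_1\oplus B_2$ are exactly the vectors $a\oplus b$ with $a$ a row of $B_1$ and $b$ a row of $B_2$, and that $F_H+\alpha\mathbf{1}$ is obtained analogously; in particular every codeword of $C_H$ can be written (not uniquely, but the ambiguity is exactly by $C_1$) as $x\oplus y$ with $x\in C_{B_1}$ and $y\in C_{B_2}$, where here $\oplus$ denotes the coordinate pattern displayed in the statement. The candidate permutation attached to a codeword $a\oplus b$ is $\pi_{a\oplus b}:=\pi_a\oplus\pi_b$, meaning the permutation of the $vv'$ coordinates that acts as $\pi_a$ on the ``block level'' and as $\pi_b$ within each block; and $\sigma_{a\oplus b}(z)=z+(a\oplus b)$. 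I would check that $\pi_a\oplus\pi_b$ has no fixed coordinate unless both $\pi_a$ and $\pi_b$ are the identity, i.e.\ unless $a=\alpha\mathbf{1}$ and $b=\beta\mathbf{1}$, which happens precisely when $a\oplus b=(\alpha+\beta)\mathbf{1}$; this is what the full propelinear condition demands, and it is where the hypothesis that both $C_{B_1}$ and $C_{B_2}$ are \emph{full} propelinear (not merely propelinear) gets used.

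Next I would establish the two displayed identities, which are essentially bookkeeping once the block structure of $\oplus$ is set up. For the first, $\pi_{a\oplus b}(x\oplus y)=\pi_a(x)\oplus\pi_b(y)$: writing a coordinate of $x\oplus y$ as indexed by a pair $(i,j)$ with $1\le i\le v$, $1\le j\le v'$, the permutation $\pi_a\oplus\pi_b$ sends position $(i,j)$ to $(\pi_a(i),\pi_b(j))$, and the entry sitting there in $x\oplus y$ is $x_i+y_j$, which lands in position $(\pi_a(i),\pi_b(j))$ of $\pi_a(x)\oplus\pi_b(y)$ as $x_{\pi_a^{-1}(\,\cdot\,)}+y_{\pi_b^{-1}(\,\cdot\,)}$; so the two agree. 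For the second, $(a\oplus b)\star(x\oplus y)=(a\oplus b)+\pi_{a\oplus b}(x\oplus y)=(a+\pi_a(x))\oplus(b+\pi_b(y))=(a\star x)\oplus(b\star y)$, using that the coordinatewise sum of $a\oplus b$ and $\pi_a(x)\oplus\pi_b(y)$ is $(a+\pi_a(x))\oplus(b+\pi_b(y))$ — this last step is just the observation that $\oplus$ is compatible with coordinatewise addition in $\mathbb{F}_q$, i.e.\ $(u\oplus v)+(u'\oplus v')=(u+u')\oplus(v+v')$.

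With these identities in hand, the propelinear axioms for $(C_H,\star)$ follow from those for $C_{B_1}$ and $C_{B_2}$: closure, $(a\oplus b)\star(x\oplus y)=(a\star x)\oplus(b\star y)\in C_{B_1}\oplus C_{B_2}=C_H$ since $C_H$ consists exactly of such sums (here one needs that the set of rows of $E_H$ equals $\{x\oplus y:x\in C_{B_1},\ y\in C_{B_2}\}$, which follows from $F_H=\{a\oplus b\}$ together with $\alpha\mathbf{1}=(\alpha\mathbf{1})\oplus\mathbf{0}$ and the fact that $C_1$-translates distribute over $\oplus$); the normalization $(a\oplus b)\star\mathbf{0}=a\oplus b$; and the cocycle-type condition $\pi_{(a\oplus b)\star(x\oplus y)}=\pi_{a\oplus b}\circ\pi_{x\oplus y}$, which reduces via the first identity to $\pi_{a\star x}\oplus\pi_{b\star y}=(\pi_a\circ\pi_x)\oplus(\pi_b\circ\pi_y)$, true because $(\pi_a\oplus\pi_b)\circ(\pi_x\oplus\pi_y)=(\pi_a\circ\pi_x)\oplus(\pi_b\circ\pi_y)$ as permutations of the block-indexed coordinate set. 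I expect the main obstacle to be purely notational: pinning down precisely how a codeword of $C_H$ decomposes as $x\oplus y$ (the decomposition is unique only modulo $C_1$, so one must check the assignment $a\oplus b\mapsto\pi_a\oplus\pi_b$ is well-defined, which is exactly the content of the earlier lemma that $a-b\in C_1\mathbf{1}$ forces $\pi_a=\pi_b$), and then carefully matching the ``lexicographic block'' indexing of the $vv'$ coordinates used in the definition of $\oplus$ with the product indexing $(i,j)$ used in the calculations above. Once that correspondence is fixed, everything else is a routine verification.
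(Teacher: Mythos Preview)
Your argument is correct, but it takes a different route from the paper. The paper leverages the cocycle machinery already built up: it invokes Corollary~\ref{corolariofundamental} to write each $B_i$ as a cocyclic matrix $M_{\psi_i}$ for an orthogonal $\psi_i\in Z^2(G_i,U)$, then uses Proposition~\ref{ptcocyclos} to see that the tensor $\psi_1\otimes\psi_2$ is again orthogonal, so $H=M_{\psi_1\otimes\psi_2}$ is cocyclic and Proposition~\ref{pfromctocg} produces the $\GHFP$ structure on $C_H$; the displayed permutation identity then drops out of the explicit description $\pi_a(l)=i\Leftrightarrow g_l=gg_i$ obtained in that proposition's proof. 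Your approach instead builds the propelinear structure on $C_H$ by hand from the two given ones and verifies the axioms directly, using the earlier lemma that $\pi$ is constant on $C_1$-cosets to resolve the well-definedness of $\pi_{a\oplus b}$. Your route is more elementary and self-contained---it never touches cocycles---and makes the block structure of $\pi_{a\oplus b}$ completely explicit; the paper's route is shorter given the surrounding theory and exhibits the result as a formal consequence of the equivalence between $\GHFP$-codes and orthogonal cocycles, at the cost of threading the argument through that equivalence twice.
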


\begin{proof}
By Corollary \ref{corolariofundamental}, we have that $B_i= M_{\psi_i}$ for $\psi_i\in Z^2(G_i,U)$ for a specific ordering of the elements of $G_i$ (for the rest of this proof, we are assuming fixed this ordering in $G_i$) with $i=1,2$. Now, using Proposition \ref{ptcocyclos}, we have $H=M_{\psi_1}\oplus M_{\psi_2}=M_{\psi_1\otimes \psi_2}$ for $\psi_1\otimes \psi_2\in Z^2(G_1\otimes G_2,U)$ which is orthogonal, i.e., $H$ is a cocyclic $\GH(w,vv'/w$). 
Therefore, by Proposition  \ref{pfromctocg}, $C_H$ is a $\GHFP$-code.

Now, assume that $a$ (resp. $b$) corresponds with a row of $B_1$ (resp. $B_2$) indexed with the element $g\in G_1$ (resp. $h\in G_2$). By the proof of Proposition \ref{pfromctocg}, we have
$\pi_a(l)=i\Leftrightarrow g_l=g g_i$ and $\pi_b(m)=j \Leftrightarrow h_m=h h_j$, where $g_j\in G_1$ and $h_j\in G_2$. For the same reason, 
$\pi_{a\oplus b}((l-1)v+m)= (i-1)v+j \Leftrightarrow(g_l,h_m)=(g,h)(g_i,h_j)$.
Therefore,
$\pi_{a\oplus b}(x\oplus y)=\pi_a(x)\oplus \pi_b(y).$ Finally, as a direct consequence, we conclude with the desired result
$(a\oplus b)\star (x\oplus y)= (a\star x)\oplus (b\star y).$
\end{proof}

\begin{corollary}
Let $\partial\phi_{(a,b)}$ be as in Example \ref{planar_exa} then $C_H$ are not linear $\GHFP$-codes  where $H={ S}^l\oplus M_{\partial\phi_{(a,b)}}$, for $l\geq 1$, are $\GH(3^a,3^{al}$) matrices  with ${ S}={ S}_{3^a}$. Moreover, $\ker(H)=l+1<\rank(H).$
\end{corollary}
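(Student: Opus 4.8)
The plan is to deduce the statement from three earlier ingredients: the Kronecker sum construction of $\GH$ matrices, the Proposition immediately above (the Kronecker sum of two $\GHFP$-codes is again a $\GHFP$-code), and the additivity of rank and kernel dimension under Kronecker sums recorded in \cite[Lemma~3]{DRV16}. First I would pin down the parameters of $H$. Since $S=S_{3^a}$ is the normalized $\GH(3^a,1)$ given by the field multiplication table and $S^l=S\oplus S^{l-1}$ is a $\GH(3^a,3^{a(l-1)})$ by the recursive definition, while $M_{\partial\phi_{(a,b)}}$ is a $\GH(3^a,1)$ by Example \ref{planar_exa}, the Kronecker sum construction applied with $w=3^a$, $v=3^{al}$ and $v'=3^a$ shows that $H=S^l\oplus M_{\partial\phi_{(a,b)}}$ is a $\GH(3^a,vv'/w)=\GH(3^a,3^{al})$. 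Both summands have entries in the same group $U=\mathbb{F}_{3^a}$, which is exactly what the construction needs.

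Next I would establish the $\GHFP$ property. The matrix $S$ is cocyclic over an orthogonal cocycle (being a $\GH(3^a,1)$), so $C_S$ is a $\GHFP$-code by Proposition \ref{pfromctocg}; arguing by induction on $l$ and using the Proposition on Kronecker sums of $\GHFP$-codes, $C_{S^l}$ is a $\GHFP$-code for every $l\geq 1$. Likewise $C_{M_{\partial\phi_{(a,b)}}}$ is a $\GHFP$-code by Proposition \ref{pfromctocg}, since $\partial\phi_{(a,b)}$ is orthogonal. A final application of that same Proposition with $B_1=S^l$ and $B_2=M_{\partial\phi_{(a,b)}}$ then gives that $C_H$ is a $\GHFP$-code.

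For the numerical part I would invoke \cite[Lemma~3]{DRV16}, which gives $\ker(C_H)=\ker(C_{S^l})+\ker(C_{M_{\partial\phi_{(a,b)}}})-1$ and $\rank(C_H)=\rank(C_{S^l})+\rank(C_{M_{\partial\phi_{(a,b)}}})-1$. Substituting $\rank(C_{S^l})=\ker(C_{S^l})=l+1$ (the consequence of \cite[Lemma~3]{DRV16} noted in the text), $\ker(C_{M_{\partial\phi_{(a,b)}}})=1$ by Corollary \ref{ckerej1}, and $\rank(C_{M_{\partial\phi_{(a,b)}}})\geq 2$ (from the Proposition preceding Corollary \ref{ckerej1}, or from Lemma \ref{c28drv16} with $q=3^a>3$), one gets $\ker(C_H)=l+1$ and $\rank(C_H)=l+\rank(C_{M_{\partial\phi_{(a,b)}}})\geq l+2>l+1=\ker(C_H)$. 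In particular the rank and the kernel dimension differ, so $C_H$ is not linear. There is no genuine obstacle here; the only points requiring care are the parameter bookkeeping in the Kronecker sum, so that $H$ really comes out as $\GH(3^a,3^{al})$, and verifying that the hypotheses of the cited lemmas hold — in particular that $a\geq 4$ in the admissible range $3\leq b\leq a-1$, which is what guarantees $\rank(C_{M_{\partial\phi_{(a,b)}}})\geq 2$.
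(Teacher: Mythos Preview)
Your proposal is correct and follows exactly the route the paper sets up: the corollary is stated without proof because it is meant to be an immediate consequence of the preceding Proposition on Kronecker sums of $\GHFP$-codes, the additivity formula for rank and kernel from \cite[Lemma~3]{DRV16}, the known values $\rank(C_{S^l})=\ker(C_{S^l})=l+1$, and Corollary~\ref{ckerej1}. Your bookkeeping on the parameters and your justification that $\rank(C_{M_{\partial\phi_{(a,b)}}})\geq 2$ (via $a\geq 4$) are also accurate.
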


\section*{Acknowledgements}
The authors would also like to thank  Kristeen Cheng for her reading of this manuscript.
The first author was supported by 
the project FQM-016 funded by JJAA (Spain). The second author was supported by the Catalan grant Borsa Ferran Sunyer i Balaguer and the Spanish grant TIN2016-77918-P (AEI/FEDER, UE). The third author was supported by the Irish Research Council (Government of Ireland Postdoctoral Fellowship, GOIPD/2018/304).

\newpage


\end{document}